\tikzstyle arrowstyle=[scale=1]
\tikzstyle directed=[postaction={decorate,decoration={markings,mark=at position .65 with {\arrow[arrowstyle]{stealth}}}}]
\tikzstyle reverse directed=[postaction={decorate,decoration={markings,mark=at position .65 with {\arrowreversed[arrowstyle]{stealth};}}}]
\newtheorem{Def}{Definition}
\newtheorem{Th}{Theorem}[section]
\newtheorem{Lm}{Lemma}[section]
\newtheorem{remark}{Remark}[section]
\newtheorem{theorem}{Theorem}[section]
\theoremstyle{definition}
\numberwithin{equation}{section}
\newcommand{\bc}{\begin{center}}
\newcommand{\ec}{\end{center}}
\newcommand{\be}{\begin{eqnarray}}
\newcommand{\ee}{\end{eqnarray}}
\newcommand{\ben}{\begin{eqnarray*}}
\newcommand{\een}{\end{eqnarray*}}
\newcommand{\Om}{{\rm\Omega}}
\newcommand{\dx}{\,dx}
\newcommand{\ds}{\,ds}
\newcommand{\Rmnum}[1]{\expandafter\@slowromancap\romannumeral #1@}
\newcommand{\Rmath}{\mathbb{R}}
\newcommand{\Smath}{\mathbb{S}}
\newcommand{\Pma}{\mathcal{P}}
\newcommand{\Tma}{\mathcal{T}_h}
\newcommand{\sigRTS}{\sigma_{\rm RT}}
\newcommand{\uRTS}{u_{\rm RT}}
\newcommand{\PiRT}{\Pi_{\rm RT}}
\newcommand{\uS}{u}
\newcommand{\sigS}{\sigma}
\newcommand{\nBi}{\bm{n}}
\newcommand{\sSpaMBi}{S}
\newcommand{\uSpaMBi}{D}
\newcommand{\ubi}{u_\Pma}
\newcommand{\ubiM}{u_\Pma}
\newcommand{\sbiM}{\sigma_\Pma}
\newcommand{\SHHJ}{{\rm HHJ}(\Tma)}
\newcommand{\UHHJ}{U_{\rm HHJ}(\Tma)}
\newcommand{\sHHJ}{\sigma_{\rm HHJ}}
\newcommand{\uHHJ}{u_{\rm HHJ}}
\newcommand{\uMorley}{u_{\rm M}}
\newcommand{\conPone}{S_h}
\newcommand{\PiHHJ}{\Pi_{\rm HHJ}}
\newcommand{\bfv}{\textbf{f}}
\newcommand{\btv}{\textbf{t}}
\newcommand{\cE}{\mathcal{E}}
\newcommand{\cT}{\mathcal{T}}
\newcommand{\R}{\mathbb{R}}
\newcommand{\cP}{\ensuremath{\mathcal{P}} }
\newcommand{\yemeifont}{\fontsize{9pt}{\baselineskip}\selectfont}
\begin{document}
\title{
Optimal Superconvergence Analysis for the Crouzeix-Raviart and the Morley elements
}

\author {Jun Hu}
\address{LMAM and School of Mathematical Sciences, Peking University,
  Beijing 100871, P. R. China.  hujun@math.pku.edu.cn}

\author{Limin Ma}
\address{Department of Mathematics, Pennsylvania State University, University Park, PA,
16802, USA. maliminpku@gmail.com}

\author{Rui Ma}
\address{Institut f\"ur Mathematik, Humboldt-Universit\"at zu Berlin, 10099 Berlin. maruipku@gmail.com}
\thanks{The authors were supported by  NSFC
projects 11625101, 91430213 and 11421101}

\maketitle

\begin{abstract}
In this paper, an improved superconvergence analysis is presented for both the Crouzeix-Raviart element and the Morley element.  The main idea of the analysis  is to employ a discrete Helmholtz decomposition of the difference between the canonical interpolation and the finite element solution for the first order mixed Raviart--Thomas element and the mixed Hellan--Herrmann--Johnson element, respectively.
This in particular allows for proving a full one order superconvergence result for these two mixed finite elements.
Finally, a full one order superconvergence result of both the Crouzeix-Raviart element and the Morley element follows from
their special relations with the first order mixed Raviart--Thomas element and the mixed Hellan--Herrmann--Johnson element respectively. Those superconvergence results are also extended to mildly-structured meshes.

  \vskip 15pt

\noindent{\bf Keywords. }{superconvergence, Crouzeix-Raviart element, Morley element, Raviart--Thomas element, Hellan--Herrmann--Johnson element}

 \vskip 15pt

\noindent{\bf AMS subject classifications.}
    { 65N30, 73C02.}

\end{abstract}

\section{Introduction}
The superconvergence of  both lower order conforming finite elements and mixed finite elements is well analyzed for second order elliptic problems, see for instance, \cite{chenhigh,Chen2013Superconvergence,Brandts1994Superconvergence,Jan2000Superconvergence,Douglas1989Superconvergence} and the references therein. However, for nonconforming elements, the reduced continuity of both trial and test functions makes the corresponding superconvergence analysis very difficult. So far, most of superconvergence analysis results for nonconforming elements are focused on  rectangular or nearly parallelogram triangulations, see \cite{Zhong2005CONSTRAINED,Lin2005On,Ming2006Superconvergence}. There are a few superconvergence results for nonconforming elements on triangular meshes \cite{Hu2016Superconvergence,li2017global,mao2009high}. In \cite{Hu2016Superconvergence}, a  half order superconvergence was analyzed for the Crouzeix-Raviart (CR for short hereinafter) element and the Morley element. The main idea therein is to employ a special relation between the CR element and the Raviart--Thomas (RT for short hereinafter) element, and the equivalence between the Morley element and the Hellan--Herrmann--Johnson (HHJ for short hereinafter) element to explore some conformity of discrete stresses by these two nonconforming elements. However, a full one order superconvergence was observed in the numerical tests \cite{Hu2016Superconvergence}. Such a gap is caused by a half order superconvergence result for both the RT element \cite{Brandts1994Superconvergence} and the HHJ element \cite{Hu2016Superconvergence}, which is a half order lower than the optimal superconvergence indicated by numerical tests. It is stressed that the superconvergence  analysis of the first order RT element in \cite{Brandts1994Superconvergence}
was heavily dependent on a result of Sobolev spaces and directly used it to estimate one key sum of boundary terms.
Since a counter example \cite{J1972Non} shows that this result of Sobolev spaces can not be improved,
 it is indeed difficult to refine the former superconvergence result within the analysis of \cite{Brandts1994Superconvergence}.
  In \cite{li2017global}, the superconvergence analysis of \cite{bank2003asymptotically} for the conforming linear element was extended to the mixed finite  element, which proved a full one order superconvergence result
  for the first order RT element method of the Poisson problem under the condition  that the solution of the problem is in $H^{4+\epsilon}(\Om,\mathbb{R})$ for any $\epsilon >0$.

In this paper, a new analysis for the aforementioned boundary terms is presented, which leads to  a full one order superconvergence result for both the RT element and the HHJ element on uniform meshes and  improves the corresponding half order superconvergence result in \cite{Brandts1994Superconvergence} and \cite{Hu2016Superconvergence}, respectively. The main ingredient of such a superconvergence analysis is to employ a discrete Helmholtz decomposition of the difference between the canonical interpolation and the finite element solution of the corresponding mixed element. In particular, it allows for some vital cancellation between the boundary terms sharing a common vertex in one key sum. Then,  the final improved superconvergence result follows from  the analysis in \cite{Hu2016Superconvergence} for both the CR element of the Poisson problem and  the Morley element of the plate bending model problem.   Without using variational error expansions  in \cite{li2017global,bank2003asymptotically}, the superconvergence results can be easily generalized to mildly structured piecewise $(\alpha,\sigma)$-meshes in \cite{bank2003asymptotically,HuangXu2008,li2017global}. Moreover, the   mesh-size condition is employed in this paper such that  \begin{align*}
\big|\ln h_K\big|\approx\big|\ln h|\text{ for all }K\in\mathcal{T}_h
\end{align*}
with $h_K={\rm diam}K$ and $h=\max_{K\in\mathcal{T}_h}h_K$. This assumption is weaker than the    quasi-uniformity assumption  in \cite{li2017global}.

The remaining paper is organized as follows. Some notations are presented in Section 2. In Section 3, a full one order superconvergence result for both the RT element and the CR element is proved. In Section 4, a full one order superconvergence result for both the HHJ element and the Morley element is proved. This section also investigates the superconvergence result on  mildly structured $(\alpha,\sigma)$-meshes. Some numerical tests are presented  to verify the theoretical results in Section 5.

\section{Notation}
Given a nonnegative integer $k$ and a bounded domain $\Om\subset \mathbb{R}^2$ with Lipschitz boundary $\partial \Om$, let $W^{k,\infty}(\Om,\mathbb{R})$, $H^k(\Om,\mathbb{R})$, $| \cdot|_{k,\infty, \Om}$, $\parallel \cdot \parallel_{k,\Om}$ and $|\cdot |_{k,\Om}$ denote the usual Sobolev spaces, norm, and semi-norm, respectively. Let $\nBi$ denote the outnormal of $\partial\Omega$,   $H_0^1(\Om,\mathbb{R}) := \{u\in H^1(\Om,\mathbb{R}): u|_{\partial \Om}=0\}$ and $H^2_0(\Omega,\mathbb{R}): =\{u\in H^2(\Om,\mathbb{R}): u|_{\partial \Om}=\partial_{\nBi} u=0\}$. Denote the standard $L^2(\Om,\mathbb{R})$ inner product by $(\cdot, \cdot)$.

Suppose that $\Om\subset \mathbb{R}^2$ is a bounded polygonal domain covered exactly by a shape-regular partition $\cT_h$ into triangles. Let $|K|$ denote the area of element $K$ and $|e|$ the length of edge $e$. Let $h_K$ denote the diameter of element $K\in \cT_h$ and $h:=\max_{K\in\cT_h}h_K$. Denote the set of all interior edges and boundary edges of $\cT_h$ by $\cE_h^i$ and $\cE_h^b$, respectively, and $\cE_h=\cE_h^i\cup \cE_h^b$. For any interior edge $e=K_e^1\cap K_e^2$, denote the element with larger global label by $K_e^1$, the one with smaller global label by $K_e^2$. Denote the corresponding unit normal vector which points from $ K_e^1 $ to $K_e^2$ by $\bold{n}_e$. Let $[\cdot]$ be the jump of piecewise functions over edge $e$, namely
$$
[v]|_e := v|_{K_e^1}-v|_{K_e^2}$$
for any piecewise function $v$. For $K\subset\R^2,\ r\in \mathbb{Z}^+$, let $P_r(K, \mathbb{R})$ be the space of all polynomials of degree not greater than $r$ on $K$. Denote the piecewise gradient operator and the piecewise hessian operator by $\nabla_h$ and $\nabla_h^2$, respectively. For any piecewise function $v_h$, denote $\| v_h\|_{0,\infty, h}=\max_{K\in \cT_h} \| v_h\|_{0,\infty, K}$.

Throughout this paper,  except   in  Subsect.~4.3 for mildly  structured $(\alpha,\sigma)$-meshes, the superconvergence results require triangulations to be uniform, which means that any two adjacent triangles of $\cT_h$ form a parallelogram. 

Recall some notation in \cite{Brandts1994Superconvergence}.
For any triangle $K\in\cT_h$, from the three outer unit normal vectors, denote the two which are closest to orthogonal by $\bold{f}_1$ and $\bold{f}_2$. This procedure is in general not unique, however, only the directions of vectors are focused, thus there will be no restriction.

For each $i=1,\ 2$, denote a parallelogram, which consists of two triangles sharing an edge with normal $\bold{f}_i$, by $N_{\bold{f}_i}$. We partition the domain $\Om$ into those parallelograms $N_{\bold{f}_i} $ and some resulted boundary triangles $K_{\bold{f}_i} $. In an element $K$, denote the edge with unit normal vector $\bold{f}_i$ by $e_{\bold{f}_i}$, the length of $e_{\bold{f}_i}$ by $h_{\bold{f}_i}$, and the unit tangent vector of $e_{\bold{f}_i}$ with counterclockwise by $\bold{t}_{\bold{f}_i}$. We denote the two endpoints of the edge $e_{\bold{f}_i} $ by $\bold{p}_{\bold{f}_i}^1$ and $\bold{p}_{\bold{f}_i}^2$, and $\bold{p}_{\bold{f}_i}^1\bold{p}_{\bold{f}_i}^2 =h_{\bold{f}_i}\bold{t}_{\bold{f}_i}$.
Define
$$
\cP_b:=\big \{\bold{p} \in \partial \Om: \bold{p} \text{ is a vertex of } K_{\bold{f}_i}\big \}.
$$
Decompose the set $\cP_b$ into two parts $ \cP_b=\cP_b^1\cup \cP_b^2$, where $\cP_b^1$ is the set of corners of the domain, and $\cP_b^2$ refers to the remaining vertices. For any vertex $\bold{p}\in \cP_b^1$, denote the unique boundary triangle $K_{\bold{f}_i} $ by $K_\bold{p}$, and for any vertex $\bold{p}\in \cP_b^2$, denote the two boundary triangles $K_{\bold{f}_i} $ sharing $\bold{p}$ by $K^l_\bold{p}$ and $K^r_\bold{p}$, and
$$
K^r_\bold{p}=\{\bold{x}+h_{\bold{f}_i}\bold{t}_{\bold{f}_i}:\bold{x}\in K^l_\bold{p}\}.
$$
For any $\bold{p}\in \cP_b^2$, let $\omega_{\bold{p}}$ denote the trapezoid which consists of three elements and $\bold{p}$ is a midpoint of its edge, see Figure \ref{fig:triangulation}. Let $|\cP_b^1|=\kappa$ denote the number of the vertices in $\cP_b^1$. It is known that $\kappa $ is a fixed number independent of $h$. Figure \ref{fig:triangulation} shows an example of the definitions and notations concerning a triangulation.

\begin{figure}[!ht]
\begin{center}
\begin{tikzpicture}[xscale=6,yscale=6]
\draw[-] (0,0) -- (1.75,0);
\draw[-] (-0.125,0.9) -- (1.625,0.9);
\draw[-] (0.375,0.9) -- (0,0);
\draw[-] (0.625,0.9) -- (0.25,0);
\draw[-] (0.875,0.9) -- (0.5,0);
\draw[-] (1.125,0.9) -- (0.75,0);
\draw[-] (1.375,0.9) -- (1,0);
\draw[-] (1.625,0.9) -- (1.25,0);
\draw[-] (1.75,0.6) -- (1.5,0);
\draw[-] (0.125,0.9) -- (0.5,0);
\draw[-] (0.375,0.9) -- (0.75,0);
\draw[-] (0.625,0.9) -- (1,0);
\draw[-] (0.875,0.9) -- (1.25,0);
\draw[-] (1.125,0.9) -- (1.5,0);
\draw[-] (1.375,0.9) -- (1.75,0);
\draw[-] (1.625,0.9) -- (1.875,0.3);
\draw[-] (-0.125,0.9) -- (0.25,0);
\draw[-] (0.125,0.9) -- (0,0.6);
\draw[-] (0,0.6) -- (1.75,0.6);
\draw[-] (0.125,0.3) -- (1.875,0.3);
\draw[-] (1.75,0) -- (1.875,0.3);

\draw[-] (0.15,0.24) -- (0.275,0.54);
\draw[-] (0.175,0.18) -- (0.3,0.48);
\draw[-] (0.2,0.12) -- (0.325,0.42);
\draw[-] (0.225,0.06) -- (0.35,0.36);

\draw[-] (0.675,0.3) -- (0.8,0.6);
\draw[-] (0.725,0.3) -- (0.85,0.6);
\draw[-] (0.775,0.3) -- (0.9,0.6);
\draw[-] (0.825,0.3) -- (0.95,0.6);

\draw[-] (-0.075,0.9) -- (0.025,0.66);
\draw[-] (-0.025,0.9) -- (0.05,0.72);
\draw[-] (0.025,0.9) -- (0.075,0.78);
\draw[-] (0.075,0.9) -- (0.1,0.84);

\draw[-] (0.175,0.9) -- (0.275,0.66);
\draw[-] (0.225,0.9) -- (0.3,0.72);
\draw[-] (0.275,0.9) -- (0.325,0.78);
\draw[-] (0.325,0.9) -- (0.35,0.84);

\draw[-] (0.425,0.9) -- (0.525,0.66);
\draw[-] (0.475,0.9) -- (0.55,0.72);
\draw[-] (0.525,0.9) -- (0.575,0.78);
\draw[-] (0.575,0.9) -- (0.6,0.84);

\draw[-] (0.675,0.9) -- (0.775,0.66);
\draw[-] (0.725,0.9) -- (0.8,0.72);
\draw[-] (0.775,0.9) -- (0.825,0.78);
\draw[-] (0.825,0.9) -- (0.85,0.84);

\draw[-] (0.925,0.9) -- (1.025,0.66);
\draw[-] (0.975,0.9) -- (1.05,0.72);
\draw[-] (1.025,0.9) -- (1.075,0.78);
\draw[-] (1.075,0.9) -- (1.1,0.84);

\draw[-] (1.175,0.9) -- (1.275,0.66);
\draw[-] (1.225,0.9) -- (1.3,0.72);
\draw[-] (1.275,0.9) -- (1.325,0.78);
\draw[-] (1.325,0.9) -- (1.35,0.84);

\draw[-] (1.425,0.9) -- (1.525,0.66);
\draw[-] (1.475,0.9) -- (1.55,0.72);
\draw[-] (1.525,0.9) -- (1.575,0.78);
\draw[-] (1.575,0.9) -- (1.6,0.84);

\draw[-] (0.775,0.06) -- (1.225,0.06);
\draw[-] (0.8,0.12) -- (1.2,0.12);
\draw[-] (0.825,0.18) -- (1.175,0.18);
\draw[-] (0.85,0.24) -- (1.15,0.24);
\node at (1,0.18) {$\omega_\bold{p}$};

\draw[ultra thick,->] (0.5,0) -- (0.75,0);
\node[above] at (0.63,0) {$h_{\bold{f}_1}\bold{t}_{\bold{f}_1}$};
\draw[ultra thick,->] (0.75,0) -- (0.625,0.3);
\node at (0.75,0.2) {$h_{\bold{f}_2}\bold{t}_{\bold{f}_2}$};

\node[below] at (0.5,0) {$ \bold{p}_{\bold{f}_1}^1$};
\node[below] at (0.75,0) {$ \bold{p}_{\bold{f}_1}^2$};
\node at (0,0.8) {$K_\bold{{f_1}}$};
\node at (0.25,0.8) {$K_\bold{{f_1}}$};
\node at (0.5,0.8) {$K_\bold{{f_1}}$};
\node at (0.75,0.8) {$K_\bold{{f_1}}$};
\node at (1,0.8) {$K_\bold{{f_1}}$};
\node at (1.25,0.8) {$K_\bold{{f_1}}$};
\node at (1.5,0.8) {$K_\bold{{f_1}}$};

\node at (0.25,0.4) {$N_\bold{{f_1}}$};
\node at (0.78,0.4) {$N_\bold{{f_2}}$};

\draw[->] (0.625,0.6) -- (0.625,0.67);
\draw[->] (0.5625,0.45) -- (0.495,0.422);
\node at (0.68,0.65) {$\bold{f_1}$};
\node at (0.555,0.4) {$\bold{f_2}$};

\fill(0,0) circle(0.5pt);
\fill(0.25,0) circle(0.3pt);
\fill(0.5,0) circle(0.3pt);
\fill(0.75,0) circle(0.3pt);
\fill(1,0) circle(0.3pt);
\fill(1.25,0) circle(0.3pt);
\fill(1.5,0) circle(0.3pt);
\fill(1.75,0) circle(0.5pt);

\fill(-0.125,0.9) circle(0.5pt);
\fill(0.125,0.9) circle(0.3pt);
\fill(0.375,0.9) circle(0.3pt);
\fill(0.625,0.9) circle(0.3pt);
\fill(0.875,0.9) circle(0.3pt);
\fill(1.125,0.9) circle(0.3pt);
\fill(1.375,0.9) circle(0.3pt);
\fill(1.625,0.9) circle(0.5pt);

\node[below] at (1.05,0) {$\bold{p}\in \cP_b^2$};
\node[below] at (1.75,0) {$\bold{p}\in \cP_b^1$};
\node at (1.65,0.1) {$K_\bold{p}$};
\node at (0.9,0.1) {$K^l_\bold{p}$};
\node at (1.15,0.1) {$K^r_\bold{p}$};
\end{tikzpicture}
\caption{An uniform triangulation of $\Om$.}
\label{fig:triangulation}
\end{center}
\end{figure}
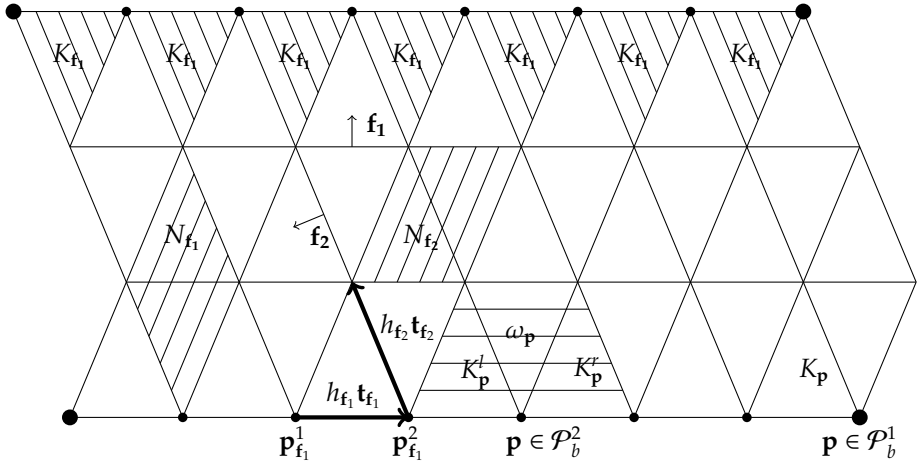

Throughout the paper, a positive constant independent of the mesh size is denoted by $C$, which refers to different values at different places. For ease of presentation, we shall use the symbol $A\lesssim B$ to denote that $A\leq CB$.

\section{Superconvergence for the RT element and the CR element}
In this section, we first improve the superconvergence result for the RT element from a half order by Brandts \cite{Brandts1994Superconvergence} to a full one order. Then, based on this result, we derive a full one order superconvergence result for the CR element, which improves the half order result in \cite{Hu2016Superconvergence}.

\subsection{Second order elliptic problem}
Given $f\in L^2(\Om,\mathbb{R})$, consider a model problem: Seek $\uS\in  H^1_0(\Om,\mathbb{R})$  such that
\be\label{variance}
(\nabla \uS, \nabla v)=(f, v) \text{\quad for any }v\in H^1_0(\Om,\mathbb{R}).
\ee
By introducing an auxiliary variable $\sigS:=\nabla \uS$, the problem can be formulated as the following equivalent mixed problem which finds $(\sigS, \uS)\in H(\text{div},\Om,\mathbb{R}^2)\times L^2(\Om,\mathbb{R})$ such that:
\begin{equation}\label{mixvariance}
\begin{aligned}
(\sigS,\tau)+(\uS,\text{div}\tau)&=0&&\text{ for any }\tau\in H(\text{div},\Om,\mathbb{R}^2),\\
(v,\text{div}\sigS)&=(-f,v)&&\text{ for any }v\in L^2(\Om,\mathbb{R}),
\end{aligned}
\end{equation}
with
$$
H(\text{div},\Om,\mathbb{R}^2) := \{\tau\in L^2(\Om,\mathbb{R}^2),\ \text{div} \tau\in L^2(\Om,\mathbb{R})\}.
$$

The corresponding finite element approximation to \eqref{variance} seeks $u_{\rm CR}\in {\rm CR_0 }(\mathcal{T}_h)$, such that
\be\label{discrete}
(\nabla_h u_{\rm CR},\nabla_h v_h)=(f, v_h)\quad \text{ for any }v_h\in {\rm CR_0 }(\mathcal{T}_h),
\ee
with the CR element spaces \cite{Crouzeix1973Conforming} over $\cT_h$
\begin{equation*}
{\rm CR }(\mathcal{T}_h):=\big \{v\in L^2(\Om,\mathbb{R}): v|_K\in P_1(K)\text{ for any }  K\in\cT_h, \int_e [v]\ds =0\text{ for any }  e\in \cE_h^i\big \},
\end{equation*}
\begin{equation*}
{\rm CR_0 }(\mathcal{T}_h):=\big \{v\in {\rm CR }(\mathcal{T}_h): \int_e v\ds=0\text{ for any }  e\in \cE_h^b\big\}.
\end{equation*}

To analyze the superconvergence of the CR element, we introduce the first order RT element \cite{Raviart1977A}. Its shape function space is
$$
\text{RT}_K:=(P_0(K))^2+ \bold{x}P_0(K)\quad\text{ for any }K\in \cT_h.
$$
The corresponding global finite element space reads
$$
\text{RT}(\cT_h):=\big \{\tau\in H(\text{div},\Om,\mathbb{R}^2): \tau|_K\in \text{RT}_K\text{ for any }K\in \cT_h\big \}.
$$
We use the piecewise constant space to approximate the displacement, namely,
$$
U_{\text{RT}}(\cT_h):=\big \{v\in L^2(\Om,\mathbb{R}):v|_K\in P_0(K) \text{ for any }K\in \cT_h\big \}.
$$
The corresponding RT element method of \eqref{mixvariance} seeks $(\sigRTS,\uRTS)\in \text{RT}(\cT_h)\times U_{\text{RT}}(\cT_h)$ such that
\begin{equation}\label{mixdis}
\begin{aligned}
(\sigRTS,\tau_h)-(\uRTS,\text{div}\tau_h)&=0&&\text{ for any }\tau_h\in \text{RT}(\cT_h),\\
(v_h,\text{div}\sigRTS)&=(f,v_h)&&\text{ for any }v_h\in U_{\text{RT}}(\cT_h).
\end{aligned}
\end{equation}

According to \cite{Brezzi2009On}, the discrete system \eqref{mixdis} has an unique solution $(\sigRTS, \uRTS)\in \text{RT}(\cT_h)\times U_{\text{RT}}(\cT_h)$. Meanwhile, the following optimal error estimates hold with detailed proofs referring to \cite{Douglas1985Global}
\begin{equation*}
\begin{split}
\parallel \sigS-\sigRTS\parallel_{0,\Om}&\lesssim h|\sigS|_{1,\Om},\label{RTu}\\
\parallel \text{div}(\sigS-\sigRTS)\parallel_{0,\Om}&\lesssim h|\sigS|_{2,\Om},
\end{split}
\end{equation*}
provided that $\sigS \in H^2(\Om,\mathbb{R}^2)$.

\subsection{Superconvergence of the RT element}\label{subsec:RT}

In this subsection, we first introduce the analysis in \cite{Brandts1994Superconvergence} for the RT element and then modify the suboptimal error estimate therein for a boundary term to a full one order optimal result.
Introduce the Fortin interpolation operator $\PiRT$, which is widely used in error analysis, such as \cite{Douglas1985Global,Dur1990Superconvergence}. Define $\PiRT:H^1(\Om,\mathbb{R}^2)\rightarrow \rm RT(\cT_h)$ as
\begin{equation*}\label{def:fortin}
\int_e (\PiRT \tau-\tau)^T\bold{n}_e\ds=0\text{\quad for any }e\in \cE_h, \tau\in H^1(\Om,\mathbb{R}^2).
\end{equation*}
It is proved in \cite{Raviart1977A} that for any $\tau \in H^1(\Om,\mathbb{R}^2)$,
\begin{equation}\label{fortinId}
(\text{div}(\tau-\PiRT\bold{\tau}),v_h)=0\quad \text{for any }v_h\in U_{\text{RT}}(\cT_h),
\end{equation}
$$
\parallel \tau- \PiRT\tau\parallel_{0,\Om}\lesssim h|\tau|_{1,\Om}.
$$
It follows from \eqref{mixdis} and \eqref{fortinId} that
\begin{equation*}\label{solinter}
\text{div} \sigRTS= \text{div} \PiRT\sigma.
\end{equation*}
Therefore, $\sigRTS - \PiRT\sigS\in \rm RT(\cT_h)$ is divergence free, and is a piecewise constant vector field. Hence, a substitution of $\tau_h=\sigRTS - \PiRT\sigS$ into \eqref{mixvariance} and \eqref{mixdis} yields
\be\label{sigtosigh}
(\sigS,\sigRTS-\PiRT\sigS)=(\sigRTS,\sigRTS-\PiRT\sigS).
\ee

We need the following result from \cite{Brandts1994Superconvergence} on Sobolev spaces. Denote the subset of the points in $\Om$ having distance less than $h$ from the boundary by $\partial_h \Om$:
$$
\partial_h\Om :=\{\bold{x}\in\Om:\exists \bold{y}\in\partial \Om \text{ such that dist}(\bold{x},\bold{y})\leq h\}.
$$
\begin{Lm}\label{bdtodomain}
For $v\in H^s(\Om,\mathbb{R})$, where $0\leq s\leq \frac{1}{2}$,
$$
\parallel v \parallel_{0,\partial_h \Om}\lesssim h^s\parallel v\parallel_{s,\Om}.
$$
\end{Lm}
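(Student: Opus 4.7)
The plan is to establish the inequality at the two endpoints $s=0$ and $s=1/2$ and then obtain all intermediate values by real interpolation of Sobolev spaces.

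The case $s=0$ is immediate: since $\partial_h\Om\subset\Om$, one has $\|v\|_{0,\partial_h\Om}\le\|v\|_{0,\Om}$, which gives the required bound with constant $1$. For the endpoint $s=1/2$ I would use a foliation of the boundary strip $\partial_h\Om$ by surfaces parallel to $\partial\Om$. Since $\Om$ is a polygon, it admits a Lipschitz tubular neighborhood of width $h$ (for $h$ small enough), so that one can parametrize $\partial_h\Om$ by $(y,t)\mapsto y-t\nBi(y)$ with $y\in\partial\Om$ and $t\in[0,h]$. Writing $\Gamma_t$ for the level surface at distance $t$, the standard trace estimate on the Lipschitz curve $\Gamma_t$ gives
\begin{equation*}
\|v\|_{0,\Gamma_t}^{2}\lesssim \|v\|_{1/2,\Om}^{2},
\end{equation*}
with a constant uniform in $t\in[0,h]$ because the Lipschitz characters of $\Gamma_t$ are controlled by that of $\partial\Om$. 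Integrating this estimate in $t$ and invoking the coarea-type change of variables yields
\begin{equation*}
\|v\|_{0,\partial_h\Om}^{2}\lesssim \int_0^h \|v\|_{0,\Gamma_t}^{2}\,dt \lesssim h\,\|v\|_{1/2,\Om}^{2},
\end{equation*}
which is the desired inequality at $s=1/2$.

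For general $0<s<1/2$, I would invoke the real interpolation identity $[L^2(\Om),H^{1/2}(\Om)]_{2s,2}=H^s(\Om)$ together with the analogous identity $[L^2(\partial_h\Om),L^2(\partial_h\Om)]_{2s,2}=L^2(\partial_h\Om)$. The bounded linear operator $v\mapsto v|_{\partial_h\Om}$ has operator norm $1$ between $L^2(\Om)$ and $L^2(\partial_h\Om)$ and norm $\lesssim h^{1/2}$ between $H^{1/2}(\Om)$ and $L^2(\partial_h\Om)$, so interpolation produces an operator norm bounded by $1^{1-2s}(h^{1/2})^{2s}=h^s$, which is exactly the claim.

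The main obstacle lies in the $s=1/2$ step, specifically in obtaining the trace estimate on $\Gamma_t$ with a constant uniform in $t$. On a general Lipschitz domain this needs a careful construction of the parallel surfaces using the unit outward normal field (which is only in $L^\infty$, not continuous, at polygonal corners); one workaround is to subdivide $\partial\Om$ into finitely many flat edges and build the strip edgewise, handling the small corner wedges separately by a direct pointwise estimate. Once the uniform trace bound is in hand, the remainder of the argument is standard.
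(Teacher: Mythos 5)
First, a point of comparison: the paper does not prove this lemma at all --- it is quoted from \cite{Brandts1994Superconvergence} as a known result on Sobolev spaces, so there is no in-paper argument to measure yours against. Judged on its own terms, your proof has a genuine gap at the $s=\tfrac12$ endpoint, and it is exactly the gap that the introduction of this paper warns about. The inequality you call ``the standard trace estimate,'' namely $\|v\|_{0,\Gamma_t}^2\lesssim\|v\|_{1/2,\Om}^2$, is false: the trace operator maps $H^s(\Om)$ boundedly into $H^{s-1/2}$ of a Lipschitz curve only for $s>\tfrac12$, and at $s=\tfrac12$ it fails to map into $L^2$ of the curve. This is precisely the Lions--Magenes counterexample \cite{J1972Non} that the authors invoke when explaining why the bound \eqref{2termI11} of Brandts cannot be improved within his framework. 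Since your real-interpolation step takes this endpoint bound as one of its two inputs, the conclusion for every $0<s\le\tfrac12$ collapses with it. Moreover, you misidentify where the difficulty lies: it is not the uniformity in $t$ of the Lipschitz characters of the level curves $\Gamma_t$ near the polygonal corners, but the failure of the $H^{1/2}(\Om)\to L^2(\Gamma_t)$ trace bound on any single curve, flat pieces included. (Indeed, letting $h\to0$ in the asserted strip estimate $h^{-1}\|v\|^2_{0,\partial_h\Om}\lesssim\|v\|^2_{1/2,\Om}$ for smooth $v$ recovers exactly the failed endpoint trace inequality, so the endpoint of the lemma cannot be reached this way.)

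A workable route for $0\le s<\tfrac12$ avoids traces altogether: use the weighted Hardy-type inequality $\|d^{-s}v\|_{0,\Om}\lesssim\|v\|_{s,\Om}$ with $d(x)=\operatorname{dist}(x,\partial\Om)$, valid on a Lipschitz domain for $s<\tfrac12$ (Lions--Magenes, Grisvard), and then simply observe that $d\le h$ on $\partial_h\Om$, so that $\|v\|_{0,\partial_h\Om}^2\le h^{2s}\|d^{-s}v\|_{0,\Om}^2\lesssim h^{2s}\|v\|_{s,\Om}^2$. The endpoint $s=\tfrac12$ is genuinely delicate --- the Hardy inequality also fails there, which is another face of the same counterexample --- and must either be handled by the separate argument of \cite{Brandts1994Superconvergence} or be accompanied by an accounting of how the constant degenerates as $s\to\tfrac12$. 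Your $s=0$ case and the interpolation mechanics themselves are fine; it is only the endpoint input that is wrong, and unfortunately it is the load-bearing one.
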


Assume that the triangulation $\cT_h$ is uniform. Suppose that the solution of \eqref{mixvariance} satisfies $\sigS\in H^{\frac{5}{2}}(\Om,\mathbb{R}^2)$. Define a matrix $F$, whose transportation  has the two unit normal vectors $\bold{f}_1$ and $\bold{f}_2$ as columns. Denote the canonical basis vectors of $\mathbb{R}^2$ in respectively the $x_1$- and $x_2$-direction by $\bold{e}_1$ and $\bold{e}_2$. In \cite{Brandts1994Superconvergence}, the analysis and    \eqref{sigtosigh} decompose  the error as follows,
\begin{equation*}
\begin{split}
\parallel \sigRTS-\PiRT\sigS\parallel_{0,\Om}^2&=\big (F(\sigRTS-\PiRT\sigS),F^{-T}(\sigS-\PiRT\sigS)\big )\\
&=\sum_{K\in\cT_h} \int_K \big (F(\sigRTS-\PiRT\sigS)\big )^TF^{-T}\big (\sigS-\PiRT\sigS\big )\dx\\
&=\sum_{i,j=1}^2\bold{I}_{ij},
\end{split}
\end{equation*}
where
$$
\bold{I}_{ij}=\sum_{K\in\cT_h} \int_K \bold{e}_i^T F(\sigRTS-\PiRT\sigS)(\bold{e}_i^TF^{-T}\bold{e}_j)(\sigS-\PiRT\sigS)^T\bold{e}_j\dx.
$$
For simplicity, only the sum $\bold{I}_{11}$ is considered. Let $\Om$ be partitioned into parallelograms $N_{\bold{f}_1}$ and the remaining boundary triangles $K_{\bold{f}_1}$. Since $\sigRTS-\PiRT\sigS$ is piecewise constant, the sum $\bold{I}_{11}$ is written as a sum over parallelograms $N_{\bold{f}_1}$ and boundary triangles $K_{\bold{f}_1}$ in \cite[Thm.~3.2]{Brandts1994Superconvergence}:
\be\label{totalterm}
|\bold{I}_{11}|\leq |\bold{I}_{11}^1|+|\bold{I}_{11}^2|,
\ee
where
$$
\bold{I}_{11}^1=(\bold{e}_1^TF^{-T}\bold{e}_1)\sum_{N_{\bold{f}_1}}\int_{N_{\bold{f}_1}}\bold{e}_1^T F(\sigRTS-\PiRT\sigS)(\sigS-\PiRT\sigS)^T\bold{e}_1\dx,
$$
\begin{equation}\label{I2original}
\bold{I}_{11}^2=(\bold{e}_1^TF^{-T}\bold{e}_1)\sum_{K_{\bold{f}_1}}\bold{e}_1^T F(\sigRTS-\PiRT\sigS)\int_{K_{\bold{f}_1}} (\sigS-\PiRT\sigS)^T\bold{e}_1\dx.
\end{equation}
Note that $\bold{e}_1^TF(\sigRTS-\PiRT\sigS)=(\sigRTS-\PiRT\sigS)^T\bold{f}_1$ is the normal component of $\sigRTS-\PiRT\sigS$ to the shared edge of the two triangles forming a parallelogram $N_{\bold{f}_1}$. Thus, $\bold{e}_1^TF(\sigRTS-\PiRT\sigS)$ is constant on $N_{\bold{f}_1}$, and therefore, leads to the following superconvergence property \cite{Brandts1994Superconvergence}:
\be\label{1termfinal}
|\bold{I}_{11}^1|\lesssim h^2\parallel \sigRTS-\PiRT\sigS\parallel_{0,\Om}|\sigS|_{2,\Om}.
\ee
For the sum $\bold{I}_{11}^2$ of boundary terms, the analysis in \cite{Brandts1994Superconvergence} showed
\be\label{2termI11}
|\bold{I}_{11}^2|\lesssim h^{3/2}\parallel \sigS-\PiRT\sigS\parallel_{0,\Om}|\sigS|_{\frac{3}{2},\Om}.
\ee
The estimate \eqref{2termI11} is resulted from a direct application of Lemma \ref{bdtodomain}. Since the estimate in Lemma \ref{bdtodomain} can not be improved as shown by a counter example in \cite{J1972Non}, it is very difficult to improve the factor in \eqref{2termI11} from $h^{3/2}$ to $h^2$ following that analysis.

A new analysis for a full one order superconvergence result of the RT element is provided in the following. The main idea here is to employ a discrete Helmholtz decomposition \cite{Arnold,Brezzi1985}  of $\sigRTS-\PiRT\sigS$ in the following lemma. As a result, it allows for some vital cancellation between the boundary terms in $\bold{I}_{11}^2$ sharing a common vertex. This idea was also employed in \cite{li2017global} to prove  a full one order optimal superconvergence result of the RT element.
\begin{Lm}\label{lemma2}
For any function $\bold{\tau}_h\in \rm RT(\cT_h)$ which satisfies
$
\text{div } \bold{\tau}_h=0,
$
$$
\bold{\tau}_h\in \bold{curl} \conPone,
$$
with $\conPone:=\{v\in H^1(\Omega,\Rmath):v|_K\in P_1(K,\Rmath)\text{ for any }K\in\Tma\}$.
\end{Lm}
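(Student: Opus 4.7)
The plan is to prove the lemma via a dimension-counting argument within the lowest-order discrete de Rham complex
\[
S_h \xrightarrow{\bm{\mathrm{curl}}} \mathrm{RT}(\cT_h) \xrightarrow{\mathrm{div}} U_{\mathrm{RT}}(\cT_h),
\]
which is well known to be exact on a simply connected polygonal domain. Combined with the easy inclusion $\bm{\mathrm{curl}} S_h \subseteq \ker(\mathrm{div})$, equality of dimensions then forces $\bm{\mathrm{curl}} S_h = \ker(\mathrm{div})$.

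First I would verify that $\bm{\mathrm{curl}} v \in \mathrm{RT}(\cT_h)$ for every $v\in S_h$. Since $v|_K\in P_1(K,\Rmath)$, the vector field $\bm{\mathrm{curl}} v = (\partial_{x_2} v, -\partial_{x_1} v)$ is piecewise constant, so it trivially lies in $(P_0(K))^2 \subset \mathrm{RT}_K$ on every $K\in\cT_h$. Across any interior edge $e$ the normal component equals $-\partial_{\bm{t}_e} v|_e$, where $\bm{t}_e$ is a unit tangent. Because $v\in H^1(\Om,\Rmath)$ is continuous and piecewise linear, its tangential derivative along $e$ is single-valued, so the normal trace of $\bm{\mathrm{curl}} v$ is continuous. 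This places $\bm{\mathrm{curl}} v$ in $H(\mathrm{div},\Om,\Rmath^2)$ and hence in $\mathrm{RT}(\cT_h)$. Divergence-freeness is the pointwise identity $\mathrm{div}\,\bm{\mathrm{curl}} = 0$.

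Next I would count dimensions. Let $|\cV_h|$, $|\cE_h|$ and $|\cT_h|$ denote respectively the numbers of vertices, edges and triangles of $\cT_h$. Then $\dim S_h = |\cV_h|$, and the kernel of $\bm{\mathrm{curl}}|_{S_h}$ consists only of constants on a connected $\Om$, so $\dim \bm{\mathrm{curl}} S_h = |\cV_h| - 1$. On the other side, $\dim \mathrm{RT}(\cT_h) = |\cE_h|$ (one degree of freedom per edge), $\dim U_{\mathrm{RT}}(\cT_h) = |\cT_h|$, and $\mathrm{div}:\mathrm{RT}(\cT_h)\to U_{\mathrm{RT}}(\cT_h)$ is surjective because of the Fortin interpolation identity \eqref{fortinId} together with the surjectivity of $\mathrm{div}:H(\mathrm{div},\Om,\Rmath^2)\to L^2(\Om,\Rmath)$. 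Hence $\dim \ker(\mathrm{div}) = |\cE_h| - |\cT_h|$. Euler's formula for a simply connected polygonal domain yields $|\cV_h| - |\cE_h| + |\cT_h| = 1$, so $|\cV_h| - 1 = |\cE_h| - |\cT_h|$, and the inclusion $\bm{\mathrm{curl}} S_h\subseteq \ker(\mathrm{div})$ upgrades to equality.

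The main subtlety is the topological input: Euler's identity $|\cV_h|-|\cE_h|+|\cT_h|=1$ requires $\Om$ to be simply connected, for otherwise $\ker(\mathrm{div})$ has additional harmonic contributions not captured by $\bm{\mathrm{curl}} S_h$. Within the setting of this paper this is already assumed, so no further hypothesis is needed. If one prefers to avoid the inf-sup style surjectivity of $\mathrm{div}$, an alternative is to construct a preimage $v\in S_h$ explicitly: fix a vertex $\bold{p}_0$, set $v(\bold{p}_0)=0$, and define $v$ at any other vertex by integrating $-\bm{\tau}_h\cdot\bm{t}$ along any path of mesh edges from $\bold{p}_0$; path-independence follows from $\mathrm{div}\,\bm{\tau}_h=0$ on each triangle via the divergence theorem, yielding a single-valued continuous piecewise-$P_1$ function whose $\bm{\mathrm{curl}}$ matches $\bm{\tau}_h$ edge by edge.
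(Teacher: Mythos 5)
Your proposal is correct and complete; note that the paper itself does not prove Lemma~\ref{lemma2} at all --- it simply cites the discrete Helmholtz decomposition from \cite{Arnold,Brezzi1985} --- so what you have written is essentially the standard exactness argument for the lowest-order discrete de Rham complex that underlies those references. Both of your routes work: the inclusion $\mathbf{curl}\,\conPone\subseteq\ker(\mathrm{div}|_{\mathrm{RT}(\cT_h)})$ plus the dimension count $\dim\mathbf{curl}\,\conPone=|\mathcal{V}_h|-1=|\cE_h|-|\cT_h|=\dim\ker(\mathrm{div}|_{\mathrm{RT}(\cT_h)})$ via Euler's formula, and the explicit construction of a potential by integrating the tangential data along mesh edges (path-independence following from $\mathrm{div}\,\bold{\tau}_h=0$ summed over the triangles enclosed by any edge-cycle). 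Two small points deserve attention. First, as you rightly flag, simple connectivity of $\Om$ is genuinely needed and is not stated in the lemma itself (the paper only makes it explicit in the analogous HHJ result, Lemma~\ref{lemm:hel}); on a multiply connected domain the kernel of $\mathrm{div}$ strictly contains $\mathbf{curl}\,\conPone$. Second, your surjectivity argument for $\mathrm{div}:\mathrm{RT}(\cT_h)\to U_{\mathrm{RT}}(\cT_h)$ applies $\PiRT$ to a preimage in $H(\mathrm{div},\Om,\R^2)$, whereas $\PiRT$ is only defined on $H^1(\Om,\R^2)$; this is repaired by using the standard fact that $\mathrm{div}:H^1(\Om,\R^2)\to L^2(\Om,\R)$ is already surjective on a bounded Lipschitz domain. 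Neither point is a real gap, and your second, constructive argument avoids the surjectivity issue entirely.
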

We need the following result for the interpolation operator $\PiRT$.
\begin{Lm}\label{linearsigma}
For any $\bold{p}\in\cP_b^2$, $K_{\bold{p}}^l$, $K_{\bold{p}}^r\in K_{\bold{f}_i}$, $i=1,\ 2$, if $\tau$ is linear on the patch $\omega_{\bold{p}}$, then
\begin{equation*}
\begin{split}
\int_{K_{\bold{p}}^l}(\tau-\PiRT\tau)\dx= \int_{K_{\bold{p}}^r}(\tau-\PiRT\tau)\dx.
\end{split}
\end{equation*}
\end{Lm}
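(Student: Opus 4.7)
The plan is to exploit the fact that $K_{\bold{p}}^r$ is a translate of $K_{\bold{p}}^l$ by the constant vector $h_{\bold{f}_i}\bold{t}_{\bold{f}_i}$, and that the residual $\tau-\PiRT\tau$ becomes invariant under this translation when $\tau$ is affine on the whole patch. The core claim I would prove is that, for $\bold{x}\in K_{\bold{p}}^l$,
\begin{equation*}
(\tau-\PiRT\tau)|_{K_{\bold{p}}^r}(\bold{x}+h_{\bold{f}_i}\bold{t}_{\bold{f}_i})=(\tau-\PiRT\tau)|_{K_{\bold{p}}^l}(\bold{x}),
\end{equation*}
after which the change of variables $\bold{y}=\bold{x}+h_{\bold{f}_i}\bold{t}_{\bold{f}_i}$ (which has unit Jacobian) gives the stated equality of integrals.

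To establish the displayed identity I would proceed in three short steps. First, since $\tau$ is linear on $\omega_{\bold{p}}$, write $\tau(\bold{x})=A\bold{x}+\bold{b}$, and observe $\tau(\bold{x}+h_{\bold{f}_i}\bold{t}_{\bold{f}_i})=\tau(\bold{x})+\bold{c}$ with $\bold{c}:=A\,h_{\bold{f}_i}\bold{t}_{\bold{f}_i}$ a constant vector; in particular the restrictions of $\tau$ to $K_{\bold{p}}^l$ and $K_{\bold{p}}^r$ are related by a pure translation plus a constant shift. Second, I would use that the canonical interpolant $\PiRT$ commutes with translation: the three edges of $K_{\bold{p}}^r$ are translates of those of $K_{\bold{p}}^l$ with the same unit normals, so the defining normal moments satisfy
\begin{equation*}
\int_{e+h_{\bold{f}_i}\bold{t}_{\bold{f}_i}}\tau(\bold{y})^{T}\bold{n}_e\ds=\int_{e}\tau(\bold{x}+h_{\bold{f}_i}\bold{t}_{\bold{f}_i})^{T}\bold{n}_e\ds
\end{equation*}
for each edge $e$ of $K_{\bold{p}}^l$. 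Combined with the fact that $\PiRT$ preserves constants (they belong to $\mathrm{RT}_K$), this yields
\begin{equation*}
\PiRT\tau|_{K_{\bold{p}}^r}(\bold{x}+h_{\bold{f}_i}\bold{t}_{\bold{f}_i})=\PiRT\tau|_{K_{\bold{p}}^l}(\bold{x})+\bold{c}.
\end{equation*}
Third, subtracting the identity for $\tau$ from the identity for $\PiRT\tau$ cancels the constant $\bold{c}$ and gives the displayed translation invariance of the residual.

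I do not anticipate a serious technical obstacle: the key is simply to keep track that the translation maps each edge of $K_{\bold{p}}^l$ to an edge of $K_{\bold{p}}^r$ with the same orientation and normal, so that the Fortin degrees of freedom match up. The only mild subtlety is making sure the shift $\bold{c}$ introduced by the linear part of $\tau$ lies in $\mathrm{RT}_K$ so that $\PiRT\bold{c}=\bold{c}$; this is immediate since $(P_0(K))^2\subset\mathrm{RT}_K$. Integrating the translation-invariance identity over $K_{\bold{p}}^l$ and performing the change of variables then closes the proof.
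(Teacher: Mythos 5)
Your proof is correct, and it takes a mildly but genuinely different route from the paper's. You prove the \emph{pointwise} translation invariance
\begin{equation*}
(\tau-\PiRT\tau)|_{K_{\bold{p}}^r}(\bold{x}+h_{\bold{f}_i}\bold{t}_{\bold{f}_i})=(\tau-\PiRT\tau)|_{K_{\bold{p}}^l}(\bold{x}),
\end{equation*}
using that $\PiRT$ commutes with translations (the RT shape space and the edge normal moments are both translation-invariant) and reproduces the constant shift $\bold{c}=A\,h_{\bold{f}_i}\bold{t}_{\bold{f}_i}\in (P_0(K))^2\subset\mathrm{RT}_K$; the equality of integrals then follows from a unit-Jacobian change of variables. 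The paper instead subtracts off the constant part to reduce to $-\int_{K}\PiRT\big(\nabla\tau\cdot(\bold{x}-\bold{M}_{K})\big)\dx$ via the centroid identities, expands the interpolant in the local RT basis $\phi_i=\frac{1}{d_i}(\bold{x}-\bold{p}_i)$, and checks that the degrees of freedom $\big(\nabla\tau\cdot(\bold{m}_i-\bold{M}_{K})\big)^T\bold{n}_i$ and the basis integrals $\int_K\phi_i\dx$ coincide on the two triangles. Both arguments rest on the same structural facts (matching normals, matching midpoint-to-centroid vectors, constancy of $\nabla\tau$), but yours is cleaner and proves a stronger statement for exact translates, while the paper's explicit basis-function computation is the form that survives perturbation: it is precisely what gets reused in Lemma 4.8 for $O(h^{1+\alpha})$ approximate parallelograms on $(\alpha,\sigma)$-meshes, where the two triangles are no longer exact translates and one must track the $O(h^{\alpha})$ discrepancies in each ingredient. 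No gap in your argument; the only hypothesis you implicitly weaken is that linearity is needed only on $K_{\bold{p}}^l\cup K_{\bold{p}}^r$ rather than on all of $\omega_{\bold{p}}$, which is harmless.
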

\begin{proof}
Denote the centroid, the vertices and the edges of element $K_{\bold{p}}^l$ by $\bold{M}_{K_{\bold{p}}^l }$, $\{\bold{p}_i^l\}_{i=1}^3$ and $\{e_i^l\}_{i=1}^3$, and those of element $K_{\bold{p}}^r$ by $\bold{M}_{K_{\bold{p}}^r}$, $\{\bold{p}_i^r\}_{i=1}^3$ and $\{e_i^r\}_{i=1}^3$. For edge $e_i^l$, denote the midpoint, the unit outward normal vector and the perpendicular height by $\bold{m}_i^l$, $\bold{n}^l_i$ and $d^l_i$, respectively. And denote those of edge $e_i^r$ by $\bold{m}_i^r$, $\bold{n}^r_i$ and $d^r_i$, respectively. The basis functions of the RT element on elements $K_{\bold{p}}^l$ and $K_{\bold{p}}^r$ are denoted by $\phi_i^l=\frac{1}{d_i^l}(\bold{x}-\bold{p}_i^l)$ and $\phi_i^r=\frac{1}{d_i^r}(\bold{x}-\bold{p}_i^r),\ 1\leq i\leq 3$, respectively.

Since $\tau$ is linear on the patch $\omega_{\bold{p}}$, $
\tau(\bold{x})= \tau(\bold{p}) + \nabla \tau\cdot (\bold{x} - \bold{p}).
$ Thus,
$$
\tau(\bold{x})-\PiRT\tau(\bold{x}) = (I - \Pi_{\rm RT})\big (\nabla \tau\cdot(\bold{x}-\bold{p})\big ).
$$
The fact that
$$
\int_{K_{\bold{p}}^l } (I - \PiRT)\big (\nabla \tau\cdot( \bold{M}_{K_{\bold{p}}^l }-\bold{p})\big )\dx =0\ \text{ and } \int_{K_{\bold{p}}^l } \nabla \tau\cdot(\bold{x}- \bold{M}_{K_{\bold{p}}^l })\dx =0
$$
leads to
\be\label{onlyPiRT}
\int_{K_{\bold{p}}^l} \big (\tau(\bold{x})-\PiRT\tau(\bold{x})\big )\dx = -\int_{K_{\bold{p}}^l} \PiRT\big (\nabla \tau\cdot(\bold{x}-\bold{M}_{K_{\bold{p}}^l })\big )\dx.
\ee
Note that $\nabla\tau|_{K_{\bold{p}}^l }=\nabla \tau|_{K_{\bold{p}}^r}$, $\bold{n}^l_i=\bold{n}^r_i$ and $\bold{m}_i^l-\bold{M}_{K_{\bold{p}}^l}=\bold{m}_i^r-\bold{M}_{K_{\bold{p}}^r }$. Thus
$$
(\nabla \tau\cdot(\bold{m}_i^l-\bold{M}_{K_{\bold{p}}^l }))^T\bold{n}_i^l=(\nabla \tau\cdot(\bold{m}_i^r-\bold{M}_{K_{\bold{p}}^r }))^T\bold{n}_i^r.
$$
Since $\int_{K_{\bold{p}}^l} \phi_i^l\dx = \int_{K_{\bold{p}}^r} \phi_i^r\dx$, this and \eqref{onlyPiRT} lead to
$$
\int_{K_{\bold{p}}^l}(\tau-\PiRT\tau)\dx= \int_{K_{\bold{p}}^r}(\tau-\PiRT\tau)\dx,
$$
which completes the proof.
\end{proof}

Employing the discrete Helmholtz decomposition, we can improve the estimate of the term $ \bold{I}_{11}$ of \cite{Brandts1994Superconvergence} in the following lemma.
\begin{Lm}\label{lm:RTbdest}
Suppose that $(\sigS, \uS)$ denotes the solution to \eqref{mixvariance} with $\sigS\in H^{\frac{5}{2}}(\Om,\mathbb{R}^2)$, $(\sigRTS, \uRTS)$ denotes the solution to \eqref{mixdis} on a uniform triangulation $\cT_h$. It holds that
\begin{equation*} 
|\bold{I}_{11}|\lesssim h^2 (| \sigS|_{\frac{5}{2},\Om}+\kappa |\ln h|^{1/2}|\sigS|_{1,\infty,\Om})\parallel \sigRTS-\PiRT \sigS \parallel_{0,\Om}.
\end{equation*}
\end{Lm}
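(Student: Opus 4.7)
The plan is to bound $\bold{I}_{11}^2$ defined in \eqref{I2original}, since $\bold{I}_{11}^1$ already admits the sharp estimate \eqref{1termfinal}. The new ingredient is the discrete Helmholtz decomposition of Lemma \ref{lemma2}: because $\mathrm{div}(\sigRTS-\PiRT\sigS)=0$, there exists $\phi_h\in\conPone$ with $\sigRTS-\PiRT\sigS=\bold{curl}\,\phi_h$, which I normalize by $\int_\Om\phi_h\dx=0$ so that $\|\phi_h\|_{1,\Om}\lesssim\|\sigRTS-\PiRT\sigS\|_{0,\Om}$ via Poincar\'e's inequality. The crucial observation is that on a boundary triangle $K$ whose edge $e_{\bold{f}_1}\subset\partial\Om$, the scalar $(\sigRTS-\PiRT\sigS)^T\bold{f}_1|_K$ equals $\pm\partial_{\bold{t}_{\bold{f}_1}}\phi_h|_K=\pm h_{\bold{f}_1}^{-1}\bigl(\phi_h(\bold{p}_{\bold{f}_1}^2)-\phi_h(\bold{p}_{\bold{f}_1}^1)\bigr)$, so it depends only on the two nodal values of $\phi_h$ at the endpoints of $e_{\bold{f}_1}$.

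I then sweep along each straight portion of $\partial\Om$ and apply summation by parts. Interior boundary vertices become $\bold{p}\in\cP_b^2$, whose two flanking triangles $K_\bold{p}^l$, $K_\bold{p}^r$ get paired, while the endpoints of each such portion are corners $\bold{p}\in\cP_b^1$. Setting $I_K:=\int_K(\sigS-\PiRT\sigS)^T\bold{e}_1\dx$, the sum rearranges into
\[
\bold{I}_{11}^2=\frac{\bold{e}_1^T F^{-T}\bold{e}_1}{h_{\bold{f}_1}}\Big(\sum_{\bold{p}\in\cP_b^2}\phi_h(\bold{p})\bigl(I_{K_\bold{p}^l}-I_{K_\bold{p}^r}\bigr)+\sum_{\bold{p}\in\cP_b^1}\pm\phi_h(\bold{p})\,I_{K_\bold{p}}\Big).
\]

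For the interior sum, I subtract the linear Taylor polynomial $\tilde\sigS_\bold{p}$ of $\sigS$ at $\bold{p}$ (well-defined since $H^{5/2}\hookrightarrow C^1$ in two dimensions). Because $\tilde\sigS_\bold{p}$ is linear on the patch $\omega_\bold{p}$, Lemma \ref{linearsigma} gives $\int_{K_\bold{p}^l}(\tilde\sigS_\bold{p}-\PiRT\tilde\sigS_\bold{p})\dx=\int_{K_\bold{p}^r}(\tilde\sigS_\bold{p}-\PiRT\tilde\sigS_\bold{p})\dx$, so only the quadratic residual $\sigS-\tilde\sigS_\bold{p}$ contributes; a fractional Bramble--Hilbert argument together with the $H^{5/2}$ regularity yields $|I_{K_\bold{p}^l}-I_{K_\bold{p}^r}|\lesssim h^{7/2}|\sigS|_{5/2,\omega_\bold{p}}$. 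A Cauchy--Schwarz in $\bold{p}$ combined with the discrete trace bound $\sum_{\bold{p}\in\cP_b^2}|\phi_h(\bold{p})|^2\lesssim h^{-1}\|\phi_h\|_{1,\Om}^2$ absorbs an $h^{1/2}$, the $h_{\bold{f}_1}^{-1}$ prefactor absorbs another $h$, and the interior contribution is therefore controlled by $h^2|\sigS|_{5/2,\Om}\|\sigRTS-\PiRT\sigS\|_{0,\Om}$ with no logarithmic factor.

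For the $\kappa$ corner terms no such pairing is possible. I control $|\phi_h(\bold{p})|$ by the discrete Sobolev inequality $\|\phi_h\|_{0,\infty,\Om}\lesssim|\ln h|^{1/2}\|\nabla\phi_h\|_{0,\Om}$ valid in two dimensions, and estimate $|I_{K_\bold{p}}|\leq|K_\bold{p}|\,\|\sigS-\PiRT\sigS\|_{0,\infty,K_\bold{p}}\lesssim h^3|\sigS|_{1,\infty,\Om}$; the $h_{\bold{f}_1}^{-1}$ prefactor then delivers a total corner contribution of $\kappa h^2|\ln h|^{1/2}|\sigS|_{1,\infty,\Om}\|\sigRTS-\PiRT\sigS\|_{0,\Om}$. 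Adding \eqref{1termfinal} finishes the proof. The main obstacle I anticipate is the careful bookkeeping of signs and orientations in the summation by parts along each straight portion of $\partial\Om$, and in particular verifying that the Lemma \ref{linearsigma} pairings are compatible with the traversal order of boundary triangles.
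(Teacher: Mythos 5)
Your overall strategy is the same as the paper's: decompose $\sigRTS-\PiRT\sigS=\bold{curl}\,w_h$ via Lemma \ref{lemma2}, rewrite the constant normal component on each boundary triangle as the difference quotient of $w_h$ at the endpoints of $e_{\bold{f}_1}$, sum by parts so that the two triangles flanking each $\bold{p}\in\cP_b^2$ are paired, invoke Lemma \ref{linearsigma} for the paired terms, and treat the $\kappa$ corner terms with the discrete Sobolev inequality. The corner estimate and the bound $\sum_{\bold{p}}|w_h(\bold{p})|^2\lesssim h^{-1}\|w_h\|_{1,\Om}^2$ are fine.

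The gap is the key local estimate $|I_{K_\bold{p}^l}-I_{K_\bold{p}^r}|\lesssim h^{7/2}|\sigS|_{\frac{5}{2},\omega_\bold{p}}$. A Bramble--Hilbert bound in terms of the seminorm $|\cdot|_{\frac{5}{2},\omega_\bold{p}}$ requires the functional $\tau\mapsto I_{K_\bold{p}^l}(\tau)-I_{K_\bold{p}^r}(\tau)$ to annihilate $P_2$, because $|q|_{\frac{5}{2},\omega_\bold{p}}=0$ for every quadratic $q$ (its second derivatives are constant). Lemma \ref{linearsigma} only gives annihilation of $P_1$, and the functional does \emph{not} vanish on quadratics: writing $K_\bold{p}^r=K_\bold{p}^l+\bold{v}$ with $\bold{v}=h_{\bold{f}_1}\bold{t}_{\bold{f}_1}$ and using that $\PiRT$ commutes with translation, one finds for quadratic $\tau$ that
\begin{equation*}
I_{K_\bold{p}^l}(\tau)-I_{K_\bold{p}^r}(\tau)=\int_{K_\bold{p}^l}\bigl(\ell-\PiRT\ell\bigr)^T\bold{e}_1\dx,\qquad \ell:=\tau-\tau(\cdot+\bold{v})\in P_1,
\end{equation*}
and $\int_K(\ell-\PiRT\ell)\dx$ is generically nonzero for affine $\ell$ (that this integral is \emph{equal} on the two triangles, not zero, is exactly the content of Lemma \ref{linearsigma}). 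Taking $\sigS$ quadratic gives a nonzero left-hand side with vanishing right-hand side, so your claimed $h^{7/2}$ bound is false. The paper's proof stops at the integer-order estimate $|I_{K_\bold{p}^l}-I_{K_\bold{p}^r}|\lesssim h^{3}|\sigS|_{2,\omega_\bold{p}}$ (Lemma \ref{linearsigma} plus the classical Bramble--Hilbert lemma) and recovers the missing half power of $h$ globally through Lemma \ref{bdtodomain}: after Cauchy--Schwarz the interior contribution is $h\|w_h\|_{0,\partial_h\Om}|\sigS|_{2,\partial_h\Om}$, and the two estimates $|\sigS|_{2,\partial_h\Om}\lesssim h^{1/2}|\sigS|_{\frac{5}{2},\Om}$ and $\|w_h\|_{0,\partial_h\Om}\lesssim h^{1/2}\|w_h\|_{1,\Om}$ on the boundary strip together produce the factor $h^2$. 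Replace your local fractional Bramble--Hilbert step by this boundary-strip argument and the rest of your proof goes through.
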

\begin{proof}
By Lemma \ref{lemma2}, there exists $w_h\in \cP_1$ such that
$$\sigRTS-\PiRT\sigS=\bold{curl} w_h \in \big(U_{\rm RT}(\mathcal{T}_h)\big)^2.$$
Then, the term $\bold{I}_{11}^2$ in \eqref{I2original} reads
\be\label{2term1}
\bold{I}_{11}^2=(\bold{e}_1^TF^{-T}\bold{e}_1)\sum_{K_{\bold{f}_1}} \bold{e}_1^TF\bold{curl} w_h\int_{K_{\bold{f}_1}}(\sigS-\PiRT \sigS)^T\bold{e}_1\dx.
\ee
Since
\begin{equation}\label{2term2}
\bold{e}_1^TF\bold{curl}w_h=\frac{1}{h_{\bold{f}_1}}\int_{e_{\bold{f}_1}}\nabla w_h\cdot \bold{t}_{\bold{f}_1}\ds=\frac{w_h(\bold{p}_{\bold{f}_1}^2)-w_h(\bold{p}_{\bold{f}_1}^1)}{h_{\bold{f}_1}},
\end{equation}
a substitution of \eqref{2term2} into \eqref{2term1} leads to
\be\label{2term3}
\begin{split}
|\bold{I}_{11}^2|\lesssim &\big|\sum_{\bold{p}\in\cP_b^2} \frac{w_h(\bold{p})}{h_{\bold{f}_1}}\big(\int_{K_{\bold{p}}^l}(\sigS-\PiRT\sigS)^T\bold{e}_1\dx- \int_{K_{\bold{p}}^r}(\sigS-\PiRT\sigS)^T\bold{e}_1\dx\big)\big|\\
& + \sum_{\bold{p}\in\cP_b^1} \big| \frac{w_h(\bold{p})}{h_{\bold{f}_1}}\int_{K_{\bold{p}}}(\sigS-\PiRT\sigS)^T\bold{e}_1\dx\big|.
\end{split}
\ee
Lemma \ref{linearsigma} and the Bramble-Hilbert lemma show
\begin{equation}\label{2term4}
\begin{split}
\big |\int_{K_{\bold{p}}^l}(\sigS-\PiRT\sigS)^T\bold{e}_1\dx- \int_{K_{\bold{p}}^r}(\sigS-\PiRT\sigS)^T\bold{e}_1\dx\big |\lesssim h^3|\sigS|_{2,\omega_{\bold{p}}}.
\end{split}
\end{equation}
A substitution of \eqref{2term4} and the Cauchy-Schwarz inequality into \eqref{2term3} yields
\begin{equation*}
\begin{split}
|\bold{I}_{11}^2|\lesssim &h\big(\sum_{\bold{p}\in\cP_b^2} \parallel w_h\parallel_{0,\omega_{\bold{p}}}^2\big)^{1/2}\big(\sum_{\bold{p}\in\cP_b^2}|\sigS|_{2,\omega_{\bold{p}}}^2\big)^{1/2} + h^2\big(\sum_{\bold{p}\in\cP_b^1} \parallel w_h\parallel_{0,\infty,K_{\bold{p}}}^2\big)^{1/2}\big(\sum_{\bold{p}\in\cP_b^1}|\sigS|_{1,\infty,K_{\bold{p}}}^2\big)^{1/2} \\
\lesssim &h \parallel w_h\parallel_{0,\partial_h \Om} | \sigS|_{2,\partial_h\Om}+\kappa h^2 |\sigS|_{1,\infty,\Om} \parallel w_h\parallel_{0,\infty,h}.
\end{split}
\end{equation*}
Lemma \ref{bdtodomain} implies that
\begin{equation}\label{eq:ess}
h \parallel w_h\parallel_{0,\partial_h \Om} | \sigS|_{2,\partial_h\Om}\lesssim h^2 \parallel w_h\parallel_{\frac{1}{2},\Om} |\sigS|_{\frac{5}{2},\Om}\lesssim h^2 \parallel w_h\parallel_{1, \Om} |\sigS|_{\frac{5}{2},\Om}.
\end{equation}
A discrete Sobolev inequality holds from \cite{Bramble1986TheCO,brenner2007mathematical} that
\begin{align}\label{eq:discreteSobolev}
\parallel w_h\parallel_{0,\infty,h}\lesssim |\ln h|^{1/2}\parallel w_h\parallel_{1,\Om}.
\end{align}
Then
\be\label{2termfinal}
|\bold{I}_{11}^2|\lesssim (h^2 |\sigS|_{\frac{5}{2},\Om}+\kappa h^2|\ln h|^{1/2}|\sigS|_{1,\infty,\Om}) \parallel w_h\parallel_{1,\Om}.
\ee
A substitution of \eqref{1termfinal} and \eqref{2termfinal} into \eqref{totalterm} concludes
$$
|\bold{I}_{11}|\lesssim (h^2 |\sigS|_{\frac{5}{2},\Om}+\kappa h^2|\ln h|^{1/2}|\sigS|_{1,\infty,\Om})\parallel \sigRTS-\PiRT\sigS\parallel_{0,\Om},
$$
which completes the proof.
\end{proof}

Similar arguments for the sums $ \bold{I}_{12}$, $ \bold{I}_{21}$ and $ \bold{I}_{22}$ prove a full one order superconvergence property for the RT element as follows.
\begin{Th}\label{Lm:bdRT}
Suppose that $(\sigS, \uS)$ is the solution to \eqref{mixvariance} with $\sigS\in H^{\frac{5}{2}}(\Om,\mathbb{R}^2)$, and $(\sigRTS, \uRTS)$ is the solution to \eqref{mixdis} on a uniform triangulation $\cT_h$. It holds that
$$
\parallel \sigRTS-\PiRT\sigS\parallel_{0,\Om}\lesssim h^2 \big (| \sigS|_{\frac{5}{2},\Om}+\kappa |\ln h|^{1/2}|\sigS|_{1,\infty,\Om}\big ).
$$
\end{Th}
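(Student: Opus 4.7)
The plan is to derive the theorem by applying the argument of Lemma \ref{lm:RTbdest} to all four components $\bold{I}_{ij}$ of the expansion
$$
\|\sigRTS-\PiRT\sigS\|_{0,\Om}^2 = \sum_{i,j=1}^{2}\bold{I}_{ij},
$$
which was already set up before Lemma \ref{lm:RTbdest}. Only $\bold{I}_{11}$ has been estimated so far, so the task is to verify that the same strategy handles the other three terms and then to sum them up.

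For each fixed $i\in\{1,2\}$, I would decompose $\Om$ into the parallelograms $N_{\bold{f}_i}$ and the boundary triangles $K_{\bold{f}_i}$, which yields the splitting $\bold{I}_{ij}=\bold{I}_{ij}^{1}+\bold{I}_{ij}^{2}$. The interior part $\bold{I}_{ij}^{1}$ is handled exactly as in \eqref{1termfinal}, because the key identity $\bold{e}_i^{T}F(\sigRTS-\PiRT\sigS)=(\sigRTS-\PiRT\sigS)^{T}\bold{f}_i$ remains constant on $N_{\bold{f}_i}$. The boundary part $\bold{I}_{ij}^{2}$ is treated by introducing the discrete Helmholtz decomposition $\sigRTS-\PiRT\sigS=\bold{curl}\,w_h$ via Lemma \ref{lemma2}, rewriting $\bold{e}_i^{T}F\bold{curl}\,w_h$ as an edge difference $(w_h(\bold{p}_{\bold{f}_i}^{2})-w_h(\bold{p}_{\bold{f}_i}^{1}))/h_{\bold{f}_i}$, and then grouping the boundary contributions per vertex. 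Lemma \ref{linearsigma}, which holds for both $i=1,2$, provides the $h^{3}|\sigS|_{2,\omega_{\bold{p}}}$ cancellation at interior boundary vertices $\bold{p}\in\cP_b^{2}$; the $\kappa$ corner vertices $\bold{p}\in\cP_b^{1}$ are controlled by the discrete Sobolev inequality \eqref{eq:discreteSobolev} combined with the $H^{1/2}(\Omega)$-bound on $w_h$ in \eqref{eq:ess}. The only change relative to the proof of Lemma \ref{lm:RTbdest} is the presence of the factor $(\sigS-\PiRT\sigS)^{T}\bold{e}_j$ with $j\in\{1,2\}$ in place of $(\sigS-\PiRT\sigS)^{T}\bold{e}_1$, which leaves the Bramble--Hilbert estimate used after Lemma \ref{linearsigma} untouched; for $i=2$, the direction $\bold{f}_1$ governing the boundary decomposition is simply replaced by $\bold{f}_2$.

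Summing the four resulting estimates and using the bound $\|w_h\|_{1,\Om}\lesssim\|\sigRTS-\PiRT\sigS\|_{0,\Om}$ (the stream function is normalized so that the constant mode vanishes, hence $\|w_h\|_{1,\Om}$ is controlled by $\|\bold{curl}\,w_h\|_{0,\Om}$) yields
$$
\|\sigRTS-\PiRT\sigS\|_{0,\Om}^{2}\lesssim h^{2}\big(|\sigS|_{\frac{5}{2},\Om}+\kappa|\ln h|^{1/2}|\sigS|_{1,\infty,\Om}\big)\,\|\sigRTS-\PiRT\sigS\|_{0,\Om},
$$
from which the theorem follows by division by $\|\sigRTS-\PiRT\sigS\|_{0,\Om}$. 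The main conceptual step, namely replacing the half-order-losing direct use of Lemma \ref{bdtodomain} with the discrete Helmholtz decomposition followed by vertex cancellation, has already been carried out in Lemma \ref{lm:RTbdest}; the remaining work is book-keeping over the four index pairs and checking that none of the identities used (in particular Lemma \ref{linearsigma} and the constancy of the normal component on $N_{\bold{f}_i}$) is special to $i=1$. I expect the only subtlety worth stating explicitly is the symmetric role of $\bold{f}_1$ and $\bold{f}_2$ in the partition of $\Om$, together with the observation that $\kappa$ depends only on the geometry of $\Om$, not on $h$, so the corner contribution remains lower order after summation.
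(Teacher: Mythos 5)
Your proposal is correct and follows essentially the same route as the paper, which disposes of Theorem~\ref{Lm:bdRT} simply by invoking ``similar arguments for the sums $\bold{I}_{12}$, $\bold{I}_{21}$ and $\bold{I}_{22}$'' after Lemma~\ref{lm:RTbdest}. Your additional remark that $\parallel w_h\parallel_{1,\Om}\lesssim \parallel \sigRTS-\PiRT\sigS\parallel_{0,\Om}$ via the normalization of the stream function and a Poincar\'e inequality is exactly the (implicit) step the paper uses to pass from the $\parallel w_h\parallel_{1,\Om}$ bound in \eqref{2termfinal} to the final estimate, so no gap remains.
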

\begin{remark}
Lemma~\ref{lm:RTbdest} employs Lemma~\ref{bdtodomain} to control \eqref{eq:ess} instead of   using the infinity norm in \cite[Lemma~3.7]{li2017global}. This avoids  to control the number of vertices on the edges and allows  a weaker assumption   \eqref{eq:meshcon} on $(\alpha,\sigma)$-meshes in Subsect.~4.3.
\end{remark}
\subsection{Superconvergence of the CR element}\label{sec:Kdefine}
A full one order superconvergence result for the CR element follows from a special relation between the RT element and the CR element.

A post-processing mechanism \cite{Brandts1994Superconvergence} is employed in \cite{Hu2016Superconvergence} for the superconvergence analysis of the CR element. Given $\textbf{q}\in \rm RT(\cT_h)$, define function $K_h \textbf{q}\in {\rm CR }(\mathcal{T}_h)\times {\rm CR }(\mathcal{T}_h)$ as follows.
\begin{Def}\label{Def:R}
1.For each interior edge $e\in\cE_h^i$, the elements $K_e^1$ and $K_e^2$ are the pair of elements sharing $e$. Then the value of $K_h \textbf{q}$ at the midpoint $\textbf{m}_e$ of $e$ is
$$
K_h \textbf{q}(\textbf{m}_e)=\frac{1}{2}\big(\textbf{q}|_{K_e^1}(\textbf{m}_e)+\textbf{q}|_{K_e^2}(\textbf{m}_e)\big).
$$

2.For each boundary edge $e\in\cE_h^b$, let $K$ be the element having $e$ as an edge, and $K'$ be an element sharing an edge $e'\in\cE_h^i$ with $K$. Let $e''$ denote the edge of $K'$ that does not intersect with $e$, and $\textbf{m}$, $\textbf{m}'$ and $\textbf{m}''$ denote the midpoints of the edges $e$, $e'$ and $e''$, respectively. Then the value of $K_h \textbf{q}$ at the point $\textbf{m}$ is
$$
K_h \textbf{q}(\textbf{m})=2K_h \textbf{q}(\textbf{m}')-K_h \textbf{q}(\textbf{m}'').
$$
\begin{center}
\begin{tikzpicture}[xscale=2.5,yscale=2.5]
\draw[-] (-0.5,0) -- (2,0);
\draw[-] (0,0) -- (0.5,1);
\draw[-] (0.5,1) -- (2,1);
\draw[-] (0.5,1) -- (1.5,0);
\draw[-] (2,1) -- (1.5,0);
\node[below, right] at (1,0.5) {\textbf{m}'};
\node[above] at (1.25,1) {\textbf{m}''};
\node[below] at (0.75,0) {\textbf{m}};
\node at (0.7,0.4) {K};
\node at (1.4,0.75) {K'};
\node at (1,0) {e};
\node at (1.4,0.2) {e'};
\node at (1.7,1) {e''};
\node at (0.3,-0.1) {$\partial \Om $};
\fill(1,0.5) circle(0.5pt);
\fill(1.25,1) circle(0.5pt);
\fill(0.75,0) circle(0.5pt);
\end{tikzpicture}
\end{center}
\end{Def}

Due to the superconvergence result of the RT element in Theorem \ref{Lm:bdRT} and the special relation between the RT element and the CR element \cite{Marini1985An}, the superconvergence result of the CR element for \eqref{variance} can be improved from a half order to a full one order following the analysis in \cite{Hu2016Superconvergence}.

\begin{Th}\label{superbd}
Suppose that $\uS\in H^{\frac{7}{2}}(\Om,\mathbb{R})\cap H^1_0(\Om,\mathbb{R})$ is the solution to \eqref{variance}, $u_{\rm CR}$ is the solution to \eqref{discrete} by the CR element on a  uniform triangulation $\cT_h$, and $f\in W^{1,\infty}(\Om,\mathbb{R})$. It holds that
$$
\parallel \nabla \uS-K_h \nabla_h u_{\rm CR}\parallel_{0,\Om}\lesssim h^2(|\uS|_{\frac{7}{2},\Om}+\kappa |\ln h|^{1/2}|\uS|_{2,\infty,\Om}+|f|_{1,\infty,\Om}).
$$
\end{Th}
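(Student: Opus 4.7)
The plan is to leverage Marini's identity connecting the CR gradient with the lowest-order RT flux and to run the same decomposition already used in \cite{Hu2016Superconvergence}, replacing the old half-order RT estimate by the sharp bound of Theorem~\ref{Lm:bdRT}. Marini's identity \cite{Marini1985An} states that on each $K\in\cT_h$,
$$\sigRTS|_K=\nabla_h u_{\rm CR}|_K-\tfrac{1}{2}\bar f_K(\bold{x}-\bold{x}_K),$$
where $\bar f_K$ is the elementwise average of $f$ and $\bold{x}_K$ the centroid of $K$. Setting $\sigS=\nabla\uS$ and inserting $K_h\PiRT\sigS$ splits the error as
\begin{equation*}
\nabla\uS-K_h\nabla_h u_{\rm CR}=\big(\nabla\uS-K_h\PiRT\sigS\big)+K_h\big(\PiRT\sigS-\sigRTS\big)-\tfrac{1}{2}K_h\big(\bar f_K(\bold{x}-\bold{x}_K)\big).
\end{equation*}

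The first piece is the post-processed RT interpolation error of $\nabla\uS$: since $K_h\circ\PiRT$ reproduces linear polynomials on each interior parallelogram patch, a patchwise Bramble--Hilbert argument gives a full-order bound of order $h^2|\uS|_{3,\Om}$, with an additional boundary-patch contribution of order $\kappa|\ln h|^{1/2}|\uS|_{2,\infty,\Om}$ controlled exactly as in \cite{Hu2016Superconvergence}. The third piece is data-driven: $\bar f_K(\bold{x}-\bold{x}_K)$ is piecewise affine with elementwise $L^2$-norm of order $h^2|f|_{0,K}$, and since the elementwise averages of $f$ over neighbouring cells differ by at most $h|f|_{1,\infty,\Om}$, the elementwise $L^2$-stability of $K_h$ yields a bound of order $h^2|f|_{1,\infty,\Om}$.

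The decisive middle piece is precisely where the improvement enters. Invoking the elementwise $L^2$-stability of $K_h$ established in \cite{Hu2016Superconvergence} and then Theorem~\ref{Lm:bdRT} with $\sigS=\nabla\uS$ gives
$$\|K_h(\PiRT\sigS-\sigRTS)\|_{0,\Om}\lesssim \|\PiRT\sigS-\sigRTS\|_{0,\Om}\lesssim h^2\big(|\uS|_{\frac{7}{2},\Om}+\kappa|\ln h|^{1/2}|\uS|_{2,\infty,\Om}\big),$$
which is exactly the step where the sharp RT estimate of Section~3 is needed. Combining the three bounds and absorbing $h^2|\uS|_{3,\Om}$ into $h^2|\uS|_{\frac{7}{2},\Om}$ completes the proof. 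The main obstacle — and the whole reason for establishing Theorem~\ref{Lm:bdRT} — is obtaining a full-order estimate for the middle piece; once that is in hand the remaining two pieces are handled by standard post-processing and interpolation arguments already present in \cite{Hu2016Superconvergence}, so no further reworking of the CR-side analysis is required.
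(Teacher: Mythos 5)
Your proposal is correct and follows essentially the same route as the paper, which proves Theorem~\ref{superbd} by invoking Marini's identity, the post-processing operator $K_h$, and the improved RT bound of Theorem~\ref{Lm:bdRT} while deferring the remaining details to \cite{Hu2016Superconvergence}. The only imprecision is that Marini's identity holds for the CR solution with right-hand side $P_0 f$ rather than $f$ itself, but the resulting perturbation is of order $h^2|f|_{1,\infty,\Om}$ and is absorbed exactly as in \cite{Hu2016Superconvergence}.
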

\begin{remark}
As analyzed in \cite{Brandts1994Superconvergence}, the vector $K_h \PiRT\sigS$ is a higher order approximation of $\sigS$ than $\PiRT\sigS$ itself. Thanks to the one order superconvergence  result of the RT element in Theorem \ref{superbd} and the equivalence between the RT element and the ECR element \cite{Hu2015The}, a similar argument may  prove a full one order superconvergence result  for the ECR element method of the Poisson problem \eqref{variance}.
\end{remark}
\section{Superconvergence for the HHJ element and the Morley element}
\label{sec:HHJ}
Given $f\in L^2(\Omega,\Rmath)$, the plate bending model problem finds $\ubi\in H^2_0(\Omega,\Rmath)$ such that
\begin{equation}\label{Plateequation}
  (\nabla^2 \ubi,\nabla^2v)=(f,v)\quad\text{for any }v\in H^2_0(\Omega,\Rmath).
\end{equation}
Suppose $\Smath:=\text{symmetric }\Rmath^{2\times2}$. Given $K\in\Tma$ and $\tau\in H^1(K,\Smath)$, let $$\tau_{\nBi\nBi}=\nBi^T\tau\nBi$$
with the unit outnormal  $\nBi$ of $\partial K$.
Define the following two spaces
 \begin{equation*}
  \begin{split}
  \sSpaMBi: =&\{\bm{\tau}\in L^2(\Omega,\Smath):\tau|_K\in H^1(K,\Smath)\text{ for any }K\in\Tma,\\
  &\text{ and }\bm{\tau}_{\nBi\nBi}\text{ is continuous across interior edges}\},\\
 \uSpaMBi: =&\{v\in H^1_0(\Omega,\Rmath):v|_K\in H^2(K,\Rmath)\text{ for any }K\in\Tma\}.
  \end{split}
 \end{equation*}
For any $\tau\in \sSpaMBi$ and $v\in \uSpaMBi$, define the bilinear form
 \begin{align*}
 \langle{\rm div}{\rm \bm{div}}_h\tau,v\rangle :=-\sum_{K\in\Tma}\Big((\tau,\nabla^2v)_{L^2(K)}-\int_{\partial K}\tau_{\nBi\nBi}\frac{\partial v}{\partial \nBi}ds\Big).
 \end{align*}
By introducing an auxiliary variable $\sbiM:=\nabla^2\ubi$, the mixed formulation of \eqref{Plateequation} seeks $(\sbiM,\ubiM)\in \sSpaMBi\times \uSpaMBi$, see \cite{johnson1973convergence},
 \begin{equation}\label{MixedPlate}
\begin{split}
 &(\sbiM, \tau)+\langle{\rm div}{\rm \bm{div}}_h\tau,\ubiM\rangle =0\quad \text{ for any }\tau\in \sSpaMBi,\\
 &\langle{\rm div}{\rm \bm{div}}_h\sbiM,v\rangle=(-f, v)\quad \text{ for any }v\in \uSpaMBi.
\end{split}
 \end{equation}

The Morley element method of \eqref{Plateequation} finds $\uMorley\in {\rm M}(\mathcal{T}_h)$ such that
\begin{equation}\label{PlateDiscrete}
  (\nabla^2_{h}\uMorley,\nabla^2_{h}v)=(f,v)\quad\text{for any }v\in {\rm M}(\mathcal{T}_h),
\end{equation}
where the Morley element space is defined in \cite{morley1968triangular} by
\begin{equation*}
\begin{split}
{\rm M}(\mathcal{T}_h): =&\{v\in L^2(\Om, \mathbb{R}):v|_K\in P_2(K)  \text{ for each }K\in\cT_h, v \text{ is continuous at each}\\
&\text{ interior vertex and vanishes at each boundary vertex}, \int_e [\frac{\partial v}{\partial \nBi}]\ds=0\\
& \text{ for all }e\in \cE_h^i, \text{ and }\int_e \frac{\partial v}{\partial \nBi}\ds=0 \text{ for all }e\in \cE_h^b \}.
\end{split}
\end{equation*}

Introduce the first order HHJ element \cite{johnson1973convergence}:
  \begin{equation*}
  \begin{split}
  \SHHJ: =&\{\tau\in\sSpaMBi:\tau|_K\in P_0(K,\Smath)\text{ for any }K\in\Tma\},\\
\UHHJ: =&\{v\in H^1_0(\Omega,\Rmath):v|_K\in P_1(K,\Rmath)\text{ for any }K\in\Tma\}.
  \end{split}
 \end{equation*}
The corresponding approximation to  \eqref{MixedPlate} finds $(\sHHJ,\uHHJ)\in \SHHJ\times  \UHHJ$ such that
 \begin{equation}\label{MixedplateDiscrete}
\begin{split}
 &(\sHHJ, \tau)+\langle{\rm div}{\rm \bm{div}}_h\tau,\uHHJ\rangle =0\quad \text{ for any }\tau\in \SHHJ,\\
 &\langle{\rm div}{\rm \bm{div}}_h\sHHJ,v\rangle=(-f, v)\quad \text{ for any }v\in  \UHHJ.
\end{split}
 \end{equation}

\subsection{Superconvergencec of the HHJ element}
Introduce the interpolation  operator $\PiHHJ:\sSpaMBi\rightarrow \SHHJ$  \cite{a1976mixed}:
 \begin{equation}\label{interDefHHJ}
  \int_e(\PiHHJ\tau)_{\nBi\nBi}ds=\int_e\tau_{\nBi\nBi}ds\quad\text{for all } e\in\mathcal{E}_h.
 \end{equation}
 Moreover if $\tau\in H^1(\Omega,\Smath)$, then
 \begin{equation}\label{interpolationHHJ}
  \|\tau-\PiHHJ\tau\|_{0,\Omega}\lesssim h|\tau|_{1,\Omega}.
 \end{equation}
Since $v|_K\in P_1(K,\Rmath)$ for any $v\in\UHHJ$ and $K\in\Tma$, it holds that
\begin{align}
\label{interpolationHHJtwo}
\langle{\rm div}{\rm \bm{div}}_h(\tau-\PiHHJ\tau),v\rangle=0\text{ for any }v\in \UHHJ.
\end{align}

Define the rigid motion space
\[
{\rm RM}=\left\{\begin{pmatrix}c_1-c_3x_2\\
c_2+c_3x_1\end{pmatrix}\bigg| c_1,c_2,c_3\in\Rmath\right\}.
\]

The subsequent parts analyze the superconvergence of the HHJ element. The argument is similar as in Section \ref{subsec:RT}.
As proved in \cite[Lemma 5.1]{Hu2016Superconvergence}, it holds
\begin{equation*}
 (\sHHJ-\sbiM,\sHHJ-\PiHHJ\sbiM)=0.
\end{equation*}
This leads to
  \begin{equation*}
   (\sHHJ-\PiHHJ\sbiM,\sHHJ-\PiHHJ\sbiM)=(\sHHJ-\PiHHJ\sbiM,\sbiM-\PiHHJ\sbiM).
  \end{equation*}
  Let $\Psi_{i}\in P_0(K,\Smath),1\leq i\leq 3$ denote the basis functions, i.e., $(\Psi_i)_{\bfv_j\bfv_j}=\delta_{ij}$, where $\{\bfv_i\}_{i=1}^3$ are the normal vectors as shown in Figure \ref{fig:triangulation}. Then, the following decomposition holds:
  \begin{equation}\label{eqm}
   (\sHHJ-\PiHHJ\sbiM,\sHHJ-\PiHHJ\sbiM)=\sum_{i=1}^3\bold{J}_i,
  \end{equation}
where
  \begin{equation*}
    \bold{J}_i:=\sum_{K\in\Tma}\int_K(\sHHJ-\PiHHJ\sbiM)_{\bfv_i\bfv_i}\Psi_{i}:(\sbiM-\PiHHJ\sbiM)dx.
  \end{equation*}
 For simplicity, only the sum $\bold{J}_1$ is considered in \cite{Hu2016Superconvergence}. Since $(\sHHJ-\PiHHJ\sbiM)_{\bfv_1\bfv_1}$ is continuous and constant on $N_{\bfv_1}$, and $\Psi_{1}$ is constant on $N_{\bfv_1}$, the sum $\bold{J}_1$ is rewritten as a sum over parallelogram $N_{\bfv_1}$ and boundary triangles $K_{\bfv_1}$ in \cite[Thm.~5.3]{Hu2016Superconvergence}:
  \begin{align*}
  |\bold{J}_1|\leq |\bold{J}_1^1|+|\bold{J}_1^2|,
  \end{align*}
  where
    \begin{equation*}
   \bold{J}_1^1=\sum_{N_{f_1}}(\sHHJ-\PiHHJ\sbiM)_{\bfv_1\bfv_1}\int_{N_{f_1}}\Psi_{1}:(\sbiM-\PiHHJ\sbiM)dx,
   \end{equation*}
\begin{equation} \label{itemtotaltwo}
   \bold{J}^2_1=\sum_{K_{f_1}}\int_{K_{f_1}}(\sHHJ-\PiHHJ\sbiM)_{\bfv_1\bfv_1}\Psi_{1}:(\sbiM-\PiHHJ\sbiM)dx.
  \end{equation}
Theorem~5.3 of \cite{Hu2016Superconvergence} has an optimal superconvergence result for $\bold{J}_1^1$
  \begin{align}
  \label{HHJest1}
   |\bold{J}_1^1|\lesssim h^2\|\sHHJ-\PiHHJ\sbiM\|_{0,\Omega}\|\sbiM\|_{2,\Omega},
   \end{align}
    and a suboptimal result for $\bold{J}_1^2$  
   \begin{align*}
      |\bold{J}_1^2|\lesssim h^{\frac{3}{2}}\|\sHHJ-\PiHHJ\sbiM\|_{0,\Omega}\|\sbiM\|_{\frac{3}{2},\Omega}.
  \end{align*}
 We will modify the estimate for  $\bold{J}_1^2$ in \cite{Hu2016Superconvergence} to an optimal superconvergence result by use of  the following two lemmas.
\begin{Lm}\cite[Theorem 5.2]{Krendl2016decomposition}\label{lemm:hel}
Let $\Omega$ be simply connected. Given $\tau_h\in \SHHJ$, if $\langle{\rm div}{\rm \bm{div}}_h \tau_h,v_h\rangle =0$ for any $v_h\in\UHHJ $, then there exists a unique function $\phi_h\in (\conPone)^2/{\rm RM}$ such that
\begin{align*}
\tau_h=H^T\epsilon(\phi_h)H
\end{align*}
with $H=\begin{pmatrix}0 & -1\\1 & 0\end{pmatrix}$ and $\epsilon(\phi_h)=\big(\nabla\phi_h+(\nabla\phi_h)^T\big)/2$.
\end{Lm}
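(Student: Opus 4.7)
The plan is to construct an explicit linear map $\Phi:(\conPone)^2/{\rm RM}\to\SHHJ$ by $\Phi(\phi_h):=H^T\epsilon(\phi_h)H$ and to prove it is a bijection onto $\mathcal{Z}:=\{\tau_h\in\SHHJ:\langle{\rm div}{\rm \bm{div}}_h\tau_h,v_h\rangle=0\text{ for all }v_h\in\UHHJ\}$. This is the discrete counterpart of the classical Airy stress-function representation of symmetric tensors with vanishing $\mathrm{div}\,\mathrm{div}$ on a simply connected domain, or equivalently the exactness in the middle of the discrete complex
\[
0\to(\conPone)^2/{\rm RM}\xrightarrow{\Phi}\SHHJ\xrightarrow{{\rm div}{\rm \bm{div}}_h}\UHHJ^{*}\to 0.
\]

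First I would verify that $\Phi$ lands in $\mathcal{Z}$. For $\phi_h\in(\conPone)^2$, $\Phi(\phi_h)$ is trivially piecewise constant and symmetric; since $H\nBi=\btv$, one has
$\bigl(\Phi(\phi_h)\bigr)_{\nBi\nBi}=\btv^T\epsilon(\phi_h)\btv=\partial_\btv(\phi_h\cdot\btv)$,
which is continuous across every interior edge because $\phi_h$ is globally continuous, so $\Phi(\phi_h)\in\SHHJ$. To confirm that the ${\rm div}{\rm \bm{div}}_h$-test against $\UHHJ$ vanishes, I would use $\nabla^2v_h\equiv0$ elementwise for $v_h\in\UHHJ$ to reduce $\langle{\rm div}{\rm \bm{div}}_h\Phi(\phi_h),v_h\rangle$ to the boundary sum $\sum_{K\in\Tma}\int_{\partial K}\partial_\btv(\phi_h\cdot\btv)\,\partial_\nBi v_h\,ds$, and then perform a tangential integration by parts along each edge (exploiting $\partial_\btv\partial_\nBi v_h=0$ elementwise) so that the expression collapses to a signed sum of vertex evaluations of $\phi_h\cdot\btv$ weighted by $\partial_\nBi v_h|_e$. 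Continuity of $\phi_h$ at each interior vertex together with $v_h|_{\partial\Omega}=0$ forces the sum to cancel. Injectivity modulo ${\rm RM}$ is immediate: $H^T\epsilon(\phi_h)H=0$ implies $\epsilon(\phi_h)=0$ by invertibility of $H$, hence $\phi_h\in{\rm RM}$.

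Surjectivity is the main obstacle. A clean finish is by dimension count: granted that ${\rm div}{\rm \bm{div}}_h:\SHHJ\to\UHHJ^{*}$ is surjective (the discrete inf-sup condition underpinning the stability of \eqref{MixedplateDiscrete}), one has $\dim\mathcal{Z}=\dim\SHHJ-\dim\UHHJ=\#\mathcal{E}_h-\#\mathcal{V}_h^i$; combined with the Euler identities $\#\mathcal{V}_h-\#\mathcal{E}_h+\#\Tma=1$ and $3\#\Tma=2\#\mathcal{E}_h^i+\#\mathcal{E}_h^b$ on a simply connected polygonal domain, this simplifies to $\#\mathcal{E}_h-\#\mathcal{V}_h^i=2\#\mathcal{V}_h-3=\dim(\conPone)^2/{\rm RM}$, matching $\dim\Phi\bigl((\conPone)^2/{\rm RM}\bigr)$ by the injectivity just proved. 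A more hands-on alternative, avoiding any appeal to inf-sup, reconstructs $\phi_h$ directly from $\tau_h\in\mathcal{Z}$: fix a base vertex, propagate $\phi_h$ to every other vertex along a spanning tree via edge increments prescribed by $\tau_h$, and use simple connectedness together with $\langle{\rm div}{\rm \bm{div}}_h\tau_h,v_h\rangle=0$ applied to the nodal hat function at each interior vertex to verify path-independence around every closed loop. The most delicate step in either route is the vertex bookkeeping in the boundary-term reduction described above.
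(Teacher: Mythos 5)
The paper gives no proof of this lemma at all---it is imported verbatim as \cite[Theorem 5.2]{Krendl2016decomposition}---so there is no internal argument to compare yours against; what you have written is a self-contained alternative to that citation. Your overall strategy (show $\Phi(\phi_h)=H^T\epsilon(\phi_h)H$ lands in the kernel $\mathcal{Z}$ of ${\rm div}{\rm \bm{div}}_h$, prove injectivity modulo ${\rm RM}$, and conclude surjectivity by a dimension count) is sound, and the count itself checks out: $\dim\mathcal{Z}=\#\cE_h-\#\{\text{interior vertices}\}=2\#\{\text{vertices}\}-3=\dim(\conPone)^2/{\rm RM}$ follows from Euler's formula together with $3\#\Tma=2\#\cE_h^i+\#\cE_h^b$. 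Two places need more than you give them. First, in the complex property, continuity of $\phi_h$ at a vertex $\mathbf{z}$ only lets you factor out $\phi_h(\mathbf{z})$ from the collapsed boundary sum; the actual cancellation at $\mathbf{z}$ rests on the vector identity that the signed sum, over all edges meeting $\mathbf{z}$, of $[\partial_{\nBi}v_h]_e\,\btv_e=H[\nabla v_h]_e$ vanishes---a telescoping of gradient jumps around the fan of elements at $\mathbf{z}$, with the open fans at boundary vertices closed off by $v_h|_{\partial\Omega}=0$. That identity, not continuity alone, is the heart of the matter; it does hold (one can verify it on a criss-cross patch), but it is precisely the ``delicate bookkeeping'' you defer, so it must actually be written out. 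Second, the dimension count presupposes surjectivity of ${\rm div}{\rm \bm{div}}_h:\SHHJ\to(\UHHJ)^*$, i.e.\ the discrete inf-sup condition for the HHJ method; this is classical but should be cited, not merely ``granted''. Finally, your fallback spanning-tree construction is harder than you suggest: along an edge, $(\tau_h)_{\nBi\nBi}$ only prescribes the increment of the tangential component $\phi_h\cdot\btv$, so recovering the full vector requires propagating the elementwise rotation (the antisymmetric part of $\nabla\phi_h$, determined only up to a constant by $\epsilon(\phi_h)=H\tau_hH^T$) around the mesh as well---a discrete Saint-Venant argument. I would drop that alternative and keep the dimension count, with the two gaps above filled in or replaced by the citation the paper already uses.
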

It follows from \eqref{MixedPlate}, \eqref{MixedplateDiscrete} and \eqref{interpolationHHJtwo} that
\begin{align*}
\langle{\rm div}{\rm \bm{div}}_h(\sHHJ-\PiHHJ\sbiM), v_h\rangle =0 \text{ for any } v_h\in\UHHJ.
\end{align*}
Lemma~\ref{lemm:hel} shows that there exists $\phi_h\in(\conPone)^2/{\rm RM}$ such that
\begin{equation}\label{eq:prokey}
\sHHJ-\PiHHJ\sbiM=H^T\epsilon(\phi_h)H.
\end{equation}
\begin{Lm}\label{lem:highHHJ}
For any $\textbf{p}=(p_1, p_2)\in \Pma_b^2,K_{\textbf{p}}^l,K_{\textbf{p}}^r\in K_{\bfv_1}$, if $\tau_\Pma \in \sSpaMBi$ is linear on the patch $\omega_{\textbf{p}}$, then
\[
\int_{K_{\textbf{p}}^l}(\tau_\Pma-\PiHHJ\tau_\Pma)\,dx=\int_{K_{\textbf{p}}^r}(\tau_\Pma-\PiHHJ\tau_\Pma)\,dx,
\]
where $ \Pma_b^2$, $K_{\textbf{p}}^l$ , $K_{\textbf{p}}^r$ and $ K_{\bfv_1}$ are defined in Section \ref{subsec:RT} as shown in Figure \ref{fig:triangulation}.
\end{Lm}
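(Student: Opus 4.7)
My plan is to adapt the strategy of Lemma \ref{linearsigma} from the RT setting to the HHJ setting, exploiting the fact that $\Pi_{\rm HHJ}$ is defined element-wise through edge-based normal-normal moments and that $K_\mathbf{p}^l$ and $K_\mathbf{p}^r$ are translates of one another by $h_{\mathbf{f}_1}\mathbf{t}_{\mathbf{f}_1}$.

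First I would use that $\tau_{\mathcal{P}}$ is linear on $\omega_{\mathbf{p}}$ to write, on $K_{\mathbf{p}}^l$,
\begin{equation*}
\tau_{\mathcal{P}}(x)=\tau_{\mathcal{P}}(\mathbf{M}_{K_{\mathbf{p}}^l})+\sum_{k=1}^2(x-\mathbf{M}_{K_{\mathbf{p}}^l})_k\,\partial_k\tau_{\mathcal{P}},
\end{equation*}
where each $\partial_k\tau_{\mathcal{P}}$ is a constant symmetric matrix. Since $\Pi_{\rm HHJ}$ preserves constant symmetric matrices, the constant term drops out of $\tau_{\mathcal{P}}-\Pi_{\rm HHJ}\tau_{\mathcal{P}}$. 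Integrating over $K_{\mathbf{p}}^l$ and using $\int_{K_{\mathbf{p}}^l}(x-\mathbf{M}_{K_{\mathbf{p}}^l})_k\,dx=0$ reduces the claim to an identity between the two element-wise HHJ interpolants
\begin{equation*}
|K_\mathbf{p}^l|\,\Pi_{\rm HHJ}\bigl((x-\mathbf{M}_{K_{\mathbf{p}}^l})_k\,\partial_k\tau_{\mathcal{P}}\bigr)\quad\text{and}\quad|K_\mathbf{p}^r|\,\Pi_{\rm HHJ}\bigl((x-\mathbf{M}_{K_{\mathbf{p}}^r})_k\,\partial_k\tau_{\mathcal{P}}\bigr),
\end{equation*}
each being a constant symmetric matrix on its respective triangle.

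Next I would characterize these constant matrices through the degrees of freedom \eqref{interDefHHJ}: on each triangle, $\Pi_{\rm HHJ}$ of a tensor whose components are linear is the unique constant symmetric matrix whose normal-normal component along every edge equals the edge-average of the source, which in the linear case equals the value at the midpoint. Consequently, if $\mathbf{m}_i^l$ and $\mathbf{n}_i^l$ denote the midpoint and outward normal of edge $e_i^l$ of $K_\mathbf{p}^l$, the corresponding normal-normal coefficient is $(\partial_k\tau_\mathcal{P})_{\mathbf{n}_i^l\mathbf{n}_i^l}(\mathbf{m}_i^l-\mathbf{M}_{K_\mathbf{p}^l})_k$, and analogously on $K_\mathbf{p}^r$.

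Finally I would invoke the translation $K_\mathbf{p}^r=K_\mathbf{p}^l+h_{\mathbf{f}_1}\mathbf{t}_{\mathbf{f}_1}$, which gives $|K_\mathbf{p}^l|=|K_\mathbf{p}^r|$, matches corresponding edges so that $\mathbf{n}_i^r=\mathbf{n}_i^l$, and shifts centroids and midpoints by the same vector so that $\mathbf{m}_i^r-\mathbf{M}_{K_\mathbf{p}^r}=\mathbf{m}_i^l-\mathbf{M}_{K_\mathbf{p}^l}$. Combined with the fact that $\partial_k\tau_\mathcal{P}$ is the same constant matrix on both triangles (by linearity on $\omega_\mathbf{p}$), the two normal-normal coefficients agree edge by edge. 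Since a constant symmetric matrix is determined by its three edge-normal moments, the two interpolants coincide, giving the desired identity. The main subtlety, in comparison with Lemma \ref{linearsigma}, is that the HHJ degrees of freedom are tensorial rather than scalar; the key observation that the normal-normal moment at a midpoint is invariant under the translation sending $K_\mathbf{p}^l$ to $K_\mathbf{p}^r$ handles this cleanly.
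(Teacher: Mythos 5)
Your proposal is correct and follows essentially the same route as the paper's proof: expand $\tau_\Pma$ about the centroid, use that $\PiHHJ$ reproduces constants and that $\int_K(\bold{x}-\bold{M}_K)\dx=0$ to reduce to the interpolant of the centered linear part, and then match the edge normal--normal moments $(\partial_k\tau_\Pma)_{\bold{n}_i\bold{n}_i}(\bold{m}_i-\bold{M}_K)_k$ using the translation invariance of $\bold{n}_i$, $\bold{m}_i-\bold{M}_K$, $|K|$ and the constant gradient matrices. The paper phrases the final step via the basis functions $\Psi_i$ with $\int_{K^l}\Psi_i^l\dx=\int_{K^r}\Psi_i^r\dx$, whereas you invoke unisolvence of the constant symmetric tensor from its three edge moments; these are the same argument.
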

\begin{proof} Recall some notations used in Lemma \ref{linearsigma}, namely,  the centroid $\bold{M}_{K_{\bold{p}}^l }=(M_{K_{\bold{p}}^l }^1, M_{K_{\bold{p}}^l }^2)$, the vertices $\{\bold{p}_i^l\}_{i=1}^3$,  and the edges $\{e_i^l\}_{i=1}^3$ of element $K_{\bold{p}}^l$,  and those of element $K_{\bold{p}}^r$ by $\bold{M}_{K_{\bold{p}}^r}=(M_{K_{\bold{p}}^r }^1, M_{K_{\bold{p}}^r }^2)$, $\{\bold{p}_i^r\}_{i=1}^3$ and $\{e_i^r\}_{i=1}^3$. For edge $e_i^l$, denote the midpoint, the unit outward normal vector and the tangent vector by $\bold{m}_i^l$, $\bold{n}^l_i$ and $\bold{t}^l_i$, respectively. And denote those of edge $e_i^r$ by $\bold{m}_i^r$, $\bold{n}^r_i$ and $\bold{t}^r_i$, respectively. The basis functions of the HHJ element on elements $K_{\bold{p}}^l$ and $K_{\bold{p}}^r$ are denoted by
$$
\Psi_i^l=\frac{1}{2\big((n_i^l)^T\bold{t}^l_{i-1}\big)\big((n_i^l)^T\bold{t}^l_{i+1}\big)}\big(\bold{t}^l_{i+1}(\bold{t}^l_{i-1})^T + \bold{t}^l_{i-1}(\bold{t}^l_{i+1})^T\big)
$$
and
$$
\Psi_i^r=\frac{1}{2\big((n_i^r)^T\bold{t}^r_{i-1}\big)\big((n_i^r)^T\bold{t}^r_{i+1}\big)}\big(\bold{t}^r_{i+1}(\bold{t}^r_{i-1})^T + \bold{t}^r_{i-1}(\bold{t}^r_{i+1})^T\big),
$$
respectively. Note that $(\Psi_i^l)_{\bold{n}^l_j\bold{n}^l_j}=\delta_{ij}$ and $(\Psi_i^r)_{\bold{n}^r_j\bold{n}^r_j}=\delta_{ij}$.

Since $\tau_\Pma=(\tau_\Pma^{ij})_{i,j=1}^2$ is linear on the patch $\omega_{\bold{p}}$,
$$
\tau_\Pma(\bold{x})= \tau_\Pma(\bold{p}) + (x_1 - p_1)H_1 + (x_2 - p_2)H_2.
$$
where $H_k^{ij}=\frac{\partial}{\partial_{x_k}}\tau_\Pma^{ij}$, $i, j, k=1, 2$,  and $H_k=(H_k^{ij})_{i,j=1}^2$ are constant matrices.
Thus,
$$
\tau_\Pma(\bold{x})-\PiHHJ\tau_\Pma(\bold{x}) = (I - \PiHHJ)\big ((x_1 - M_{K_{\bold{p}}^l }^1)H_1 + (x_2 - M_{K_{\bold{p}}^l }^2)H_2\big ).
$$
The fact that
$$
\int_{K_{\bold{p}}^l }  (\bold{x}- \bold{M}_{K_{\bold{p}}^l } )\dx =\bold{0}
$$
leads to
\begin{equation}\label{onlyPiHHJ}
\int_{K_{\bold{p}}^l} \big (\tau_\Pma(\bold{x})-\PiHHJ\tau_\Pma(\bold{x})\big )\dx = -\int_{K_{\bold{p}}^l} \PiHHJ\big ((x_1 - M_{K_{\bold{p}}^l }^1)H_1 + (x_2 - M_{K_{\bold{p}}^l }^2)H_2\big )\dx.
\end{equation}
Note that $H_k|_{K_{\bold{p}}^l }=H_k|_{K_{\bold{p}}^r}$, $k=1, 2$ and  $\bold{n}^l_i=\bold{n}^r_i$, thus,
$$
(\bold{n}_i^l)^TH_k\bold{n}_i^l = (\bold{n}_i^r)^TH_k\bold{n}_i^r.
$$
Since $\int_{K_{\bold{p}}^l}  \Psi_i^l\dx = \int_{K_{\bold{p}}^r} \Psi_i^r\dx$ and $\bold{m}_i^l-\bold{M}_{K_{\bold{p}}^l}=\bold{m}_i^r-\bold{M}_{K_{\bold{p}}^r }$, these and \eqref{onlyPiHHJ} lead to
$$
\int_{K_{\bold{p}}^l}(\tau_\Pma-\PiHHJ\tau_\Pma)\dx= \int_{K_{\bold{p}}^r}(\tau_\Pma-\PiHHJ\tau_\Pma)\dx,
$$
which completes the proof.
\end{proof}
  \begin{Lm}\label{lem:Jonetwo}Assume $\sbiM\in H^{\frac{5}{2}}(\Omega)$. Then, 
  \begin{align*}
  |\bold{J}_1^2|\lesssim h^2(|\sbiM|_{\frac{5}{2},\Omega}+\kappa|\ln h|^{1/2}|\sbiM|_{1,\infty,\Omega})\|\sHHJ-\PiHHJ\sbiM\|_{0,\Omega}.
  \end{align*}
  \end{Lm}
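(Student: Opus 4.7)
The plan is to mirror the proof of Lemma~\ref{lm:RTbdest} for the RT element, with the discrete Helmholtz decomposition of Lemma~\ref{lemm:hel} playing the role that Lemma~\ref{lemma2} played there. First I would apply Lemma~\ref{lemm:hel} to write
\[
\sHHJ-\PiHHJ\sbiM = H^T\epsilon(\phi_h)H
\]
for some $\phi_h\in(\conPone)^2/{\rm RM}$, choosing the representative so that Korn's second inequality gives $\|\phi_h\|_{1,\Omega}\lesssim\|\epsilon(\phi_h)\|_{0,\Omega}=\|\sHHJ-\PiHHJ\sbiM\|_{0,\Omega}$. Since $H\bfv_1=\btv_{\bfv_1}$ (up to sign), a short computation yields
\[
(\sHHJ-\PiHHJ\sbiM)_{\bfv_1\bfv_1} = \btv_{\bfv_1}^T\epsilon(\phi_h)\btv_{\bfv_1} = \btv_{\bfv_1}^T\nabla\phi_h\,\btv_{\bfv_1},
\]
and because $\phi_h$ is continuous and piecewise linear, the restriction of this quantity to the edge $e_{\bfv_1}$ of any boundary triangle $K_{\bfv_1}$ equals the divided difference $\bigl(\btv_{\bfv_1}\cdot\phi_h(\bold{p}^2_{\bfv_1})-\btv_{\bfv_1}\cdot\phi_h(\bold{p}^1_{\bfv_1})\bigr)/h_{\bfv_1}$.

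Substituting this expression into \eqref{itemtotaltwo} and regrouping the resulting sum over endpoints rather than triangles (using that each vertex $\bold{p}\in\cP_b^2$ is shared by $K_{\bold{p}}^l$ and $K_{\bold{p}}^r$), I would obtain a representation analogous to \eqref{2term3}:
\begin{equation*}
|\bold{J}_1^2|\lesssim\sum_{\bold{p}\in\cP_b^2}\frac{|\btv_{\bfv_1}\cdot\phi_h(\bold{p})|}{h_{\bfv_1}}\Bigl|\Psi_1:\!\!\int_{K_{\bold{p}}^l}\!\!(\sbiM-\PiHHJ\sbiM)\dx-\Psi_1:\!\!\int_{K_{\bold{p}}^r}\!\!(\sbiM-\PiHHJ\sbiM)\dx\Bigr|+\sum_{\bold{p}\in\cP_b^1}\frac{|\btv_{\bfv_1}\cdot\phi_h(\bold{p})|}{h_{\bfv_1}}\Bigl|\Psi_1:\!\!\int_{K_{\bold{p}}}\!\!(\sbiM-\PiHHJ\sbiM)\dx\Bigr|.
\end{equation*}
For the first sum, I would split $\sbiM$ into its linear Taylor polynomial at $\bold{p}$ on the patch $\omega_{\bold{p}}$ plus a remainder; Lemma~\ref{lem:highHHJ} kills the linear part (the two boundary integrals coincide), and the Bramble--Hilbert lemma combined with the standard interpolation bound for $\PiHHJ$ controls the remainder by $h^3|\sbiM|_{2,\omega_{\bold{p}}}$. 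For the $\kappa$-many corner terms, a direct $L^\infty$ estimate of $\sbiM-\PiHHJ\sbiM$ yields $h^3|\sbiM|_{1,\infty,K_{\bold{p}}}$.

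Then Cauchy--Schwarz, the bounded overlap of the patches $\omega_{\bold{p}}$, and Lemma~\ref{bdtodomain} produce
\[
h\|\phi_h\|_{0,\partial_h\Om}|\sbiM|_{2,\partial_h\Om}\lesssim h^2\|\phi_h\|_{1,\Om}|\sbiM|_{\frac{5}{2},\Om},
\]
while for the corner contributions the discrete Sobolev inequality \eqref{eq:discreteSobolev} gives
\[
\kappa h^2\|\phi_h\|_{0,\infty,h}|\sbiM|_{1,\infty,\Om}\lesssim \kappa h^2|\ln h|^{1/2}\|\phi_h\|_{1,\Om}|\sbiM|_{1,\infty,\Om}.
\]
Korn's inequality then transfers $\|\phi_h\|_{1,\Om}$ back to $\|\sHHJ-\PiHHJ\sbiM\|_{0,\Om}$, yielding the claimed bound. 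The main obstacle I anticipate is bookkeeping the cancellation structure at vertices of $\cP_b^2$ carefully enough to apply Lemma~\ref{lem:highHHJ} cleanly — specifically, verifying that after contraction with the constant matrix $\Psi_1$ and the Bramble--Hilbert step, the difference of the two adjacent boundary-triangle integrals genuinely produces the $h^3|\sbiM|_{2,\omega_{\bold{p}}}$ factor rather than a weaker $h^{5/2}$ factor that would destroy the one-order gain.
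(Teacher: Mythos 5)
Your proposal follows essentially the same route as the paper's proof: substitute the discrete Helmholtz decomposition $\sHHJ-\PiHHJ\sbiM=H^T\epsilon(\phi_h)H$ from Lemma~\ref{lemm:hel} into \eqref{itemtotaltwo}, rewrite $(H^T\epsilon(\phi_h)H)_{\bfv_1\bfv_1}$ as a divided difference of $\btv_{\bfv_1}\cdot\phi_h$ at the edge endpoints, regroup the sum by vertices, invoke Lemma~\ref{lem:highHHJ} with the Bramble--Hilbert lemma for the $h^3|\sbiM|_{2,\omega_{\bold{p}}}$ cancellation at $\cP_b^2$, and finish with Cauchy--Schwarz, Lemma~\ref{bdtodomain} and the discrete Sobolev inequality \eqref{eq:discreteSobolev}. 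Your explicit mention of Korn's second inequality on $(\conPone)^2/{\rm RM}$ to pass from $\|\phi_h\|_{1,\Omega}$ back to $\|\epsilon(\phi_h)\|_{0,\Omega}=\|\sHHJ-\PiHHJ\sbiM\|_{0,\Omega}$ correctly supplies a step the paper leaves implicit in its closing reference to Lemma~\ref{lm:RTbdest}.
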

  \begin{proof}
  Notations $\bfv_1$, $\btv_{\bfv_1}$, $h_{\bfv_1}$, $\textbf{p}_{\btv_{\bfv_1}}^1$, $\textbf{p}_{\btv_{\bfv_1}}^2$, $K_{\bfv_1}$, $ \Pma_b^1$, $ \Pma_b^2$, $K_{\textbf{p}}^l$ , $K_{\textbf{p}}^r$ and $ K_{\bfv_1}$ below are defined in Section \ref{subsec:RT} as shown in Figure \ref{fig:triangulation}.
  A substitution of \eqref{eq:prokey} into \eqref{itemtotaltwo} shows that
  \begin{align}\label{eq:Jonetwofirst}
   \bold{J}^2_1=&\sum_{K_{\bfv_1}}(H^T\epsilon(\phi_h)H)_{\bfv_1\bfv_1}\int_{K_{\bfv_1}}\Psi_{1}:(\sbiM-\PiHHJ\sbiM)\,dx.
  \end{align}
  Since
  \begin{align}\label{eq:Jonetwosec}
 (H^T\epsilon(\phi_h)H)_{\bfv_i\bfv_i}=\frac{1}{h_{\bfv_1}}\int_{e_{\bfv_1}}\nabla(\btv_{\bfv_1}\cdot\phi_h)\cdot \btv_{\bfv_1}\,dx=\frac{\btv_{\bfv_1}\cdot\phi_h(\textbf{p}_{\btv_{\bfv_1}}^2)-\btv_{\bfv_1}\cdot\phi_h(\textbf{p}_{\btv_{\bfv_1}}^1)}{h_{{\bfv_1}}},
  \end{align}
  this leads to
   \begin{align}\label{esHHJ2}
   \begin{aligned}
   |\bold{J}^2_1|\lesssim &\bigg|\sum_{\textbf{p}\in\Pma_b^2}
   \frac{\btv_{\bfv_1}\cdot\phi_h(\textbf{p})}{h_{\bfv_1}}\Big(\int_{K_{\textbf{p}}^l}\Psi_{1}:(\sbiM-\PiHHJ\sbiM)\,dx-\int_{K_{\textbf{p}}^r}\Psi_{1}:(\sbiM-\PiHHJ\sbiM)\,dx\Big)\bigg|\\
   &+\sum_{\textbf{p}\in\Pma_b^1}\Big|\frac{\btv_{\bfv_1}\cdot\phi_h(\textbf{p})}{h_{\bfv_1}}\int_{K_{\textbf{p}}}\Psi_{1}:(\sbiM-\PiHHJ\sbiM)\,dx\Big|.
  \end{aligned}
  \end{align}
  Lemma~\ref{lem:highHHJ} and the Bramble-Hilbert lemma imply
  \begin{align}
  \begin{aligned}\label{esHHJ3}
  \big|\int_{K_{\textbf{p}}^l}\Psi_{1}:(\sbiM-\PiHHJ\sbiM)\,dx-\int_{K_{\textbf{p}}^r}\Psi_{1}:(\sbiM-\PiHHJ\sbiM)\,dx\big|
  \lesssim h^3|\sbiM|_{2,\omega_{\textbf{p}}}.
  \end{aligned}
  \end{align}
  A substitution of   \eqref{esHHJ3} and the Cauchy-Schwarz inequality into \eqref{esHHJ2} yields
  \begin{align*}
  |\bold{J}_1^2|&\lesssim h\big(\sum_{\textbf{p}\in\Pma_b^2}\|\phi_h\|^2_{0,\omega_{\textbf{p}}}\big)^{1/2}
  \big(\sum_{\textbf{p}\in\Pma_b^2}|\sbiM|_{2,\omega_{\textbf{p}}}\big)^{1/2} + h^2\big(\sum_{\textbf{p}\in\mathcal{P}_b^1}\|\phi_h\|^2_{0,\infty,K_{\textbf{p}}}\big)^{1/2}
  \big(\sum_{\textbf{p}\in\Pma_b^1}|\sbiM|_{1,K_{\textbf{p}}}\big)^{1/2}\\
  &\lesssim h\|\phi_h\|_{0,\partial_h\Omega}|\sbiM|_{2,\partial_h\Omega}+\kappa h^2\|\phi_h\|_{0,\infty,h}|\sbiM|_{1,\infty,\Omega}.
  \end{align*}
  Similar arguments as in Lemma \ref{lm:RTbdest} conclude the proof.
  \end{proof}
Similar arguments for the sums $\bold{J}_2$ and $\bold{J}_3$ prove a full one order superconvergence property for the HHJ elements as follows.
\begin{Th}\label{Thm:superHHJ}
Suppose that $(\sbiM,\ubiM)$ solves the plate bending problem \eqref{Plateequation} with $\sbiM\in H^{\frac{5}{2}}(\Omega,\Smath)$, and $(\sHHJ,\uHHJ)$ solves \eqref{MixedplateDiscrete} by the HHJ element on a uniform triangulation. It holds that
\[
\|\sHHJ-\PiHHJ\sbiM\|_{0,\Omega}\lesssim h^2
\big(|\sbiM|_{\frac{5}{2},\Omega}+\kappa|{\ln}h|^{1/2}|\sbiM|_{1,\infty,\Omega}\big).
\]
\end{Th}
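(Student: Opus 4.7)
The plan is to reduce the theorem to the quadratic identity \eqref{eqm}, which already writes the square of the left-hand side as $\bold{J}_1+\bold{J}_2+\bold{J}_3$. It then suffices to bound each $\bold{J}_i$ by $h^2$ times the appropriate seminorms of $\sbiM$ and one factor of $\|\sHHJ-\PiHHJ\sbiM\|_{0,\Omega}$, after which dividing by $\|\sHHJ-\PiHHJ\sbiM\|_{0,\Omega}$ on both sides delivers the stated estimate.

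The case $i=1$ is already complete: the interior part $\bold{J}_1^1$ is controlled by \eqref{HHJest1}, and the sharp boundary estimate for $\bold{J}_1^2$ is exactly Lemma \ref{lem:Jonetwo}. For $i\in\{2,3\}$ I would repeat the same argument verbatim with $\bfv_1$ replaced by $\bfv_i$. On a uniform triangulation every edge direction partitions $\Omega$ into parallelograms $N_{\bfv_i}$ together with boundary triangles $K_{\bfv_i}$; the normal-normal component $(\sHHJ-\PiHHJ\sbiM)_{\bfv_i\bfv_i}$ is constant across the shared edge of each $N_{\bfv_i}$ by the very definition of $\SHHJ$; and the discrete Helmholtz representation of Lemma \ref{lemm:hel}, the local cancellation identity of Lemma \ref{lem:highHHJ}, the trace inequality of Lemma \ref{bdtodomain} and the discrete Sobolev inequality \eqref{eq:discreteSobolev} are all insensitive to which of the three edge normals is singled out. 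Hence for every $i\in\{1,2,3\}$ the reasoning of \cite[Thm.~5.3]{Hu2016Superconvergence} together with the proof of Lemma \ref{lem:Jonetwo} yields
$$|\bold{J}_i^1|\lesssim h^2\,|\sbiM|_{2,\Omega}\,\|\sHHJ-\PiHHJ\sbiM\|_{0,\Omega},$$
$$|\bold{J}_i^2|\lesssim h^2\bigl(|\sbiM|_{\frac{5}{2},\Omega}+\kappa|\ln h|^{1/2}|\sbiM|_{1,\infty,\Omega}\bigr)\|\sHHJ-\PiHHJ\sbiM\|_{0,\Omega}.$$
Summing the six bounds, absorbing $|\sbiM|_{2,\Omega}$ into $|\sbiM|_{\frac{5}{2},\Omega}$, and cancelling one factor of $\|\sHHJ-\PiHHJ\sbiM\|_{0,\Omega}$ from each side concludes the proof.

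The step I expect to be most delicate is verifying that Lemma \ref{lem:highHHJ}, whose proof is written in terms of the $\bfv_1$-aligned pair $K_{\textbf{p}}^l,K_{\textbf{p}}^r$ of boundary triangles, really extends to the other two edge normals. The only geometric inputs used in that lemma are that the paired boundary triangles share parallel outward normals and are related by a uniform translation; on a uniform mesh both properties persist for each of the three edge directions of any boundary element, and the expressions for the HHJ basis functions $\Psi_i^l,\Psi_i^r$ on the two triangles then coincide in the same way. Once this direction-independence is recorded, no further analytical input is needed and the remainder is bookkeeping.
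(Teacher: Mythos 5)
Your proposal is correct and follows essentially the same route as the paper: the paper likewise reduces the theorem to the decomposition \eqref{eqm}, handles $\bold{J}_1$ via \eqref{HHJest1} for the interior part and Lemma~\ref{lem:Jonetwo} for the boundary part, and disposes of $\bold{J}_2$ and $\bold{J}_3$ by the remark that ``similar arguments'' apply. Your explicit check that Lemma~\ref{lem:highHHJ} and the other ingredients are insensitive to the choice of edge normal is exactly the direction-independence the paper leaves implicit, so no gap remains.
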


\subsection{Superconvergence analysis of the Morley element}
A full one order superconvergence result for the Morley element follows from a special relation between the HHJ element and the Morley element.

Given $v\in H^2_0(\Omega,\Rmath)+ {\rm M}(\mathcal{T}_h)$, define the interpolation operator $\Pi_{\rm D}:H^2_0(\Omega,\Rmath)+ {\rm M}(\mathcal{T}_h) \rightarrow \UHHJ$ by
\begin{equation*}\label{interP1con}
  \Pi_{\rm D}v(z)=v(z) \text{ for each vertex $z$ of } \Tma.
\end{equation*}
 Hence, introduce an auxiliary method: the modified Morley element finds
 $\widetilde{u}_{\rm M}\in {\rm M}(\mathcal{T}_h)$ such that
\begin{equation}\label{PlateAxDiscrete}
  (\nabla^2_{h}\widetilde{u}_{\rm M},\nabla^2_{h}v)=(f,\Pi_{\rm D}v)\quad\text{for any }v\in {\rm M}(\mathcal{T}_h).
\end{equation}

Arnold et al. \cite{arnold1985mixed} proved the following equivalence between the HHJ element and the modified Morley element:
\begin{equation}
\label{equvilencePlate}
  \sHHJ=\nabla^2_{h}\widetilde{u}_{\rm M}\mbox{ and }\ \uHHJ=\Pi_{\rm D}\widetilde{u}_{\rm M},
\end{equation}
and moreover,
\begin{equation}
\label{DifferecePlate}
  \|\nabla_{h}^2(\uMorley-\widetilde{u}_{\rm M})\|_{0,\Omega}\lesssim h^2\|f\|_{0,\Omega}.
\end{equation}

We consider the post-processing mechanism for the superconvergence of the Morley element as in Section \ref{sec:Kdefine}. For any given $\textbf{q}\in \rm HHJ(\cT_h)$, $K_h \textbf{q}\in ({\rm CR }(\mathcal{T}_h))^{2\times 2}$.

%

Based on the the special relation \eqref{equvilencePlate}, the improved superconvergence result of the HHJ element in Theorem \ref{Thm:superHHJ} gives rise to a full one order superconvergence result of the Morley element following the procedure in \cite{Hu2016Superconvergence}. This superconvergence result improves the half order result for the Morley element in \cite{Hu2016Superconvergence}.
\begin{Th}\label{superbdPlate}
Suppose that $\ubiM\in H^{\frac{7}{2}}(\Om,\mathbb{R})\cap H^2_0(\Om,\mathbb{R})$ is the solution to \eqref{Plateequation}, $\uMorley$ is the solution to \eqref{PlateDiscrete} by the Morley element on a uniform triangulation $\cT_h$, and $f\in W^{1,\infty}(\Om,\mathbb{R})$. It holds that
$$
\parallel \nabla^2 \ubiM-K_h \nabla_h^2 \uMorley\parallel_{0,\Om}\lesssim h^2(|\ubiM|_{\frac{7}{2},\Om}+\kappa |\ln h|^{1/2}|\ubiM|_{2,\infty,\Om}+|f|_{1,\infty,\Om}).
$$
\end{Th}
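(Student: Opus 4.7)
The plan is to reduce the estimate to three pieces by inserting the post-processed canonical interpolation $K_h\PiHHJ\sbiM$, writing
\begin{align*}
\|\nabla^2 \ubiM - K_h\nabla_h^2\uMorley\|_{0,\Om}
&\le \|\sbiM - K_h\PiHHJ\sbiM\|_{0,\Om} \\
&\quad + \|K_h(\PiHHJ\sbiM - \sHHJ)\|_{0,\Om} + \|K_h(\sHHJ - \nabla_h^2\uMorley)\|_{0,\Om},
\end{align*}
and then bounding each term separately. The second and third terms are the two places where the new HHJ ingredient and the HHJ--Morley equivalence enter, and they are essentially straightforward applications of results already available in the excerpt.

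For the second term, I would use the local $L^\infty$-to-$L^2$ stability of the averaging operator $K_h$: since $K_h$ is defined by averaging midpoint values of piecewise constants on interior edges and by a bounded extrapolation on boundary edges, one has $\|K_h \btau\|_{0,\Om}\lesssim \|\btau\|_{0,\Om}$ for any piecewise constant matrix field $\btau$ on $\Tma$. Applied to $\btau = \PiHHJ\sbiM - \sHHJ$, Theorem \ref{Thm:superHHJ} then gives the bound $h^2(|\sbiM|_{5/2,\Om}+\kappa|\ln h|^{1/2}|\sbiM|_{1,\infty,\Om})$, which after writing $|\sbiM|_s=|\ubiM|_{s+2}$ contributes exactly the terms $|\ubiM|_{7/2,\Om}$ and $\kappa|\ln h|^{1/2}|\ubiM|_{2,\infty,\Om}$ on the right-hand side. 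For the third term, the HHJ--Morley equivalence \eqref{equvilencePlate} gives $\sHHJ-\nabla_h^2\uMorley=\nabla_h^2(\widetilde{u}_{\rm M}-\uMorley)$, so the same $K_h$-stability combined with the auxiliary estimate \eqref{DifferecePlate} produces the bound $h^2\|f\|_{0,\Om}$, which is absorbed into the $|f|_{1,\infty,\Om}$ term.

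The main obstacle, as in \cite{Hu2016Superconvergence}, is the first term $\|\sbiM - K_h\PiHHJ\sbiM\|_{0,\Om}$, which is a purely interpolation-theoretic statement about the post-processed canonical HHJ projection acting on the smooth Hessian $\sbiM=\nabla^2\ubiM$. On a uniform mesh the interior contribution is estimated by a Taylor expansion at each edge midpoint and a cancellation between the two neighbouring triangles of a parallelogram $N_{\bfv_i}$, giving $h^2|\sbiM|_{2,\Om}=h^2|\ubiM|_{4,\Om}\lesssim h^2|\ubiM|_{7/2,\Om}$. The boundary strips $K_{\bfv_i}$, where $K_h$ is defined by linear extrapolation, have to be treated with a $L^\infty$ bound together with a counting argument over the $O(\kappa)$ corner patches, where the presence of the operator $\text{div}\bm{\text{div}}\sbiM=-f$ on the right-hand side of \eqref{Plateequation} converts unresolved Hessian terms into $|f|_{1,\infty,\Om}$ contributions, exactly as in the CR case (Theorem \ref{superbd}).

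Putting the three estimates together via the triangle inequality yields the claimed bound. No new idea beyond the post-processing analysis of \cite{Hu2016Superconvergence} is required for the first and third terms; the only essential improvement is the replacement of the former half order HHJ estimate by the full one-order Theorem \ref{Thm:superHHJ} in the second term, and this is precisely what upgrades the final superconvergence rate from $h^{3/2}$ to $h^{2}$.
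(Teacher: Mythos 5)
Your decomposition and the way you invoke Theorem~\ref{Thm:superHHJ}, the $L^2$-stability of $K_h$, the equivalence \eqref{equvilencePlate} and the bound \eqref{DifferecePlate} is exactly the route the paper takes --- it gives no further detail and simply refers to the procedure of \cite{Hu2016Superconvergence}, upgraded by the new full one order HHJ estimate. One caveat: the inequality $|\ubiM|_{4,\Om}\lesssim|\ubiM|_{\frac{7}{2},\Om}$ you use for the interior part of the first term is false (higher Sobolev seminorms are not controlled by lower ones), so the parallelogram cancellation must be run through a Bramble--Hilbert/interpolation-space argument at the fractional regularity actually assumed --- a point on which the paper's own statement is itself loose, since $\sbiM=\nabla^2\ubiM\in H^{\frac{5}{2}}$, as required by Theorem~\ref{Thm:superHHJ}, already presupposes $\ubiM\in H^{\frac{9}{2}}$ rather than $H^{\frac{7}{2}}$.
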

\subsection{Remark for $(\alpha,\sigma)$-mesh}This subsection presents the superconvergence result on mildly structured meshes. Suppose $\mathcal{T}_h$ is a shape-regular triangulation.   For any $e\in\mathcal{E}_h$, let $h_e$ denote the length of $e$. For any boundary vertex $\textbf{p}\in\cP_b$, let $h_{\textbf{p}}:=\max_{K\subset\omega_{\textbf{p}}}h_K$ with the patch of $\omega_{\textbf{p}}$.
Recall the definitions of $O(h^{1+\alpha})$ approximate parallelograms and mildly structured meshes in \cite{bank2003asymptotically,li2017global}.
Given an interior edge $e\in\cE_h^i$,  let $K_e^1$ and $K_e^2$ be the two elements sharing $e$.  
Say $K_e^1$ and $K_e^2$ form an $O(h_e^{1+\alpha})$ approximate parallelogram if the lengths of any two opposite edges differ only by $O(h_e^{1+\alpha})$. Given a boundary vertex $\textbf{p}\in\partial\Omega$ associated with two boundary triangles $K_{\textbf{p}}^l$ and $K_{\textbf{p}}^r$ (similarly as $\textbf{p}\in\cP_b^2$ shown in Figure~\ref{fig:triangulation}), let $e_1^l$ (resp. $e_1^r$) denote the boundary edge of $K_{\textbf{p}}^l$ (resp. $K_{\textbf{p}}^r$) and $\bold{n}_1^l$ (resp. $\bold{n}_1^r$) denote its unit outnormal. By going along the boundaries of $K_{\textbf{p}}^l$ and $K_{\textbf{p}}^r$, define the other pairs of corresponding edges. Say $K_{\textbf{p}}^l$ and $K_{\textbf{p}}^r$ form an $O(h^{1+\alpha}_{\textbf{p}})$ approximate parallelogram if the lengths of any two corresponding edges differ only by $O(h_{\textbf{p}}^{1+\alpha})$ and $|\bold{n}_1^l-\bold{n}_1^r|=O(h^\alpha_{\textbf{p}})$. 

The triangulation $\cT_h$ satisfies the $(\alpha,\sigma)$-condition if the following hold:
\begin{enumerate}
\item Let $\cE_h^i=\cE_1+\cE_2$. For each $e\in\cE_1$, $K_e^1$ and $K_e^2$  form an $O(h_e^{1+\alpha})$ approximate parallelogram, while $\sum_{e\in\cE_2}|K_e^1|+|K_e^2|=O(h^{\sigma}) $.
\item Let $\cP_b=\cP_{b}^1+\cP_{b}^2$ denote the set of boundary vertices. The elements associated with each  $\bold{p}\in\cP_{b}^2$ form an $O(h_{\textbf{p}}^{1+\alpha})$ approximate parallelogram, and $|\cP_{b}^1|=\kappa$, where $\kappa$ is fixed independent of $h$.
\end{enumerate}

The result in \cite[Thm.~4.5]{li2017global} requires quasi-uniform meshes to control the number of vertices on the boundary. The analysis of this subsection only assumes that $\mathcal{T}_h$ is a regular triangulation with the following mesh-size condition
\begin{align}
\label{eq:meshcon}
\big|\ln h_K\big|\approx\big|\ln h|\text{ for all }K\in\mathcal{T}_h.
\end{align}
Under the assumption \eqref{eq:meshcon}, the discrete Sobolev inequality \eqref{eq:discreteSobolev} holds as well \cite{Bramble1986TheCO,brenner2007mathematical}.

For mildly structured meshes, the normal vectors $\bfv_i$ in \eqref{eqm} vary from different triangles. Let $\Psi_e$ denote basis function of ${\rm HHJ}(\cT_h)$ associated to $e$ with $(\Psi_e)_{\bold{n}_e\bold{n}_e}=1$.
Rewrite the terms in \eqref{eqm} as follows
  \begin{align}\label{eq:Jtotal}
  \begin{aligned}
  & (\sHHJ-\PiHHJ\sbiM,\sHHJ-\PiHHJ\sbiM)=\sum_{i=1}^3\sum_{K\in\Tma}\int_K(\sHHJ-\PiHHJ\sbiM)_{\bfv_i\bfv_i}\Psi_{i}:(\sbiM-\PiHHJ\sbiM)dx\\
   &=\bold{J}^1+  \bold{J}^2
   \end{aligned}
  \end{align}
with
\begin{align*}  \begin{aligned}
\bold{J}^1:=&\sum_{e\in\cE_h^i}(\sHHJ-\PiHHJ\sbiM)_{\bold{n}_e\bold{n}_e}\big(\int_{K_e^1}\Psi_{e}:(\sbiM-\PiHHJ\sbiM)dx+\int_{K_e^2}\Psi_{e}:(\sbiM-\PiHHJ\sbiM)dx\big),\\
  \bold{J}^2:=&\sum_{e\in\cE_h^b}(\sHHJ-\PiHHJ\sbiM)_{\bold{n}_e\bold{n}_e}\int_{K_e}\Psi_{e}:(\sbiM-\PiHHJ\sbiM)dx.   \end{aligned}
\end{align*}

  Below we only analyze the term $\bold{J}^2$ for the $(\alpha,\sigma)$-mesh. The estimate of $\bold{J}^1$ follows a similar argument. The following lemma proves a result analogous to Lemma~\ref{lem:highHHJ} for an $O(h^{1+\alpha})$ approximate parallelogram.
\begin{Lm}\label{lem:modify}
For any $\textbf{p}=(p_1, p_2)\in \cP_{b}^2$ associated with two boundary triangles $ K_{\textbf{p}}^l,K_{\textbf{p}}^r$, if $\tau_\Pma \in \sSpaMBi$ is linear on the patch $\omega_{\textbf{p}}$, then
\[
\big|\int_{K_{\textbf{p}}^l}(\tau_\Pma-\PiHHJ\tau_\Pma)\,dx-\int_{K_{\textbf{p}}^r}(\tau_\Pma-\PiHHJ\tau_\Pma)\,dx\big|\lesssim h_{\textbf{p}}^{2+\alpha} | \tau_\Pma|_{1,\omega_{\textbf{p}}} .
\] 
\end{Lm}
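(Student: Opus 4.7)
The plan is to mimic the proof of Lemma~\ref{lem:highHHJ} verbatim in its opening steps, then to track quantitatively how each geometric equality exploited there becomes an $O(h^{1+\alpha})$ or $O(h^\alpha)$ perturbation under the weaker $O(h_\textbf{p}^{1+\alpha})$ approximate-parallelogram hypothesis, and finally to assemble these perturbations into the claimed $O(h_\textbf{p}^{2+\alpha})$ bound.

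\medskip

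First, reuse the expansion from Lemma~\ref{lem:highHHJ}: since $\tau_\Pma$ is linear on the patch $\omega_{\textbf{p}}$, write $\tau_\Pma(\bold{x})=\tau_\Pma(\bold{p})+(x_1-p_1)H_1+(x_2-p_2)H_2$ with constant matrices $H_1,H_2$. Using $\int_{K_\textbf{p}^l}(\bold{x}-\bold{M}_{K_\textbf{p}^l})\,dx=\bold{0}$ and the analogous identity on $K_\textbf{p}^r$, we obtain
\[
\int_{K_\textbf{p}^l}(\tau_\Pma-\PiHHJ\tau_\Pma)\,dx=-\int_{K_\textbf{p}^l}\PiHHJ\big((x_1-M_{K_\textbf{p}^l}^1)H_1+(x_2-M_{K_\textbf{p}^l}^2)H_2\big)\,dx,
\]
and similarly on $K_\textbf{p}^r$. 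Expand each interpolant in the HHJ nodal basis $\Psi_i^l$, $\Psi_i^r$ associated to the edges of $K_\textbf{p}^l,K_\textbf{p}^r$, so that the left-hand side of the claim becomes a sum of three edge-indexed terms, each of the form
\[
\big((\bold{n}_i^l)^T H_k\bold{n}_i^l\big)\,(\bold{m}_i^l-\bold{M}_{K_\textbf{p}^l})_k\int_{K_\textbf{p}^l}\Psi_i^l\,dx-\big((\bold{n}_i^r)^T H_k\bold{n}_i^r\big)\,(\bold{m}_i^r-\bold{M}_{K_\textbf{p}^r})_k\int_{K_\textbf{p}^r}\Psi_i^r\,dx.
\]

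\medskip

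Second, estimate each factor separately using the $O(h_\textbf{p}^{1+\alpha})$ approximate-parallelogram hypothesis. By definition of approximate parallelograms, corresponding normals satisfy $|\bold{n}_i^l-\bold{n}_i^r|\lesssim h_\textbf{p}^\alpha$, corresponding edge lengths differ by $O(h_\textbf{p}^{1+\alpha})$, so shape-regularity gives $|\bold{m}_i^l-\bold{M}_{K_\textbf{p}^l}-(\bold{m}_i^r-\bold{M}_{K_\textbf{p}^r})|\lesssim h_\textbf{p}^{1+\alpha}$, while the basis integrals, which scale like $|K_\textbf{p}^l|$ times a shape-regular coefficient, satisfy $|\int_{K_\textbf{p}^l}\Psi_i^l\,dx-\int_{K_\textbf{p}^r}\Psi_i^r\,dx|\lesssim h_\textbf{p}^{2+\alpha}$. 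Writing the difference above as a telescoping sum over the three factors $(\bold{n}_i)^TH_k\bold{n}_i$, $(\bold{m}_i-\bold{M})_k$ and $\int\Psi_i\,dx$, each telescoped term is a product of one perturbed factor (contributing $h_\textbf{p}^\alpha$, $h_\textbf{p}^{1+\alpha}$ or $h_\textbf{p}^{2+\alpha}$) with two unperturbed factors (of orders $|H|$, $h_\textbf{p}$ and $h_\textbf{p}^2$ respectively), so each contributes at most $h_\textbf{p}^{2+\alpha}|H|$. Summing over $i,k$ and bounding $|H|\lesssim|\tau_\Pma|_{1,\omega_{\textbf{p}}}/|\omega_\textbf{p}|^{1/2}$… actually since $\tau_\Pma$ is linear on $\omega_\textbf{p}$, $|H|\lesssim |\tau_\Pma|_{1,\omega_\textbf{p}}$ up to a shape-regular factor yields the stated bound $h_\textbf{p}^{2+\alpha}|\tau_\Pma|_{1,\omega_\textbf{p}}$.

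\medskip

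The main obstacle will be the careful bookkeeping in the second step: the HHJ basis function $\Psi_i$ has the form $(\bold{t}_{i+1}\bold{t}_{i-1}^T+\bold{t}_{i-1}\bold{t}_{i+1}^T)/\big(2(\bold{n}_i\cdot\bold{t}_{i-1})(\bold{n}_i\cdot\bold{t}_{i+1})\big)$, so quantifying the $O(h_\textbf{p}^\alpha)$ difference $\Psi_i^l-\Psi_i^r$ requires differentiating this expression with respect to the unit vectors and invoking the shape-regular lower bound on the denominator; once this is in hand, the area discrepancy $||K_\textbf{p}^l|-|K_\textbf{p}^r||\lesssim h_\textbf{p}^{2+\alpha}$ combines with the $\Psi_i$ perturbation to give the basis integral estimate. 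Everything else reduces to elementary linear algebra and the shape-regularity hypothesis, so no new analytic ingredient beyond the $(\alpha,\sigma)$-definitions is needed.
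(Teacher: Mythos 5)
Your proposal follows essentially the same route as the paper's proof: the same reduction via the identity $\int_{K_{\textbf{p}}^l}(\tau_\Pma-\PiHHJ\tau_\Pma)\,dx=-\int_{K_{\textbf{p}}^l}\PiHHJ\big((x_1-M^1_{K_{\textbf{p}}^l})H_1+(x_2-M^2_{K_{\textbf{p}}^l})H_2\big)\,dx$, the same expansion in the HHJ basis, and the same telescoping over the four perturbed factors $|K|$, $\bold{m}_i-\bold{M}_K$, $\Psi_i$ and $(\bold{n}_i)^TH_k\bold{n}_i$, each of which the $O(h_{\textbf{p}}^{1+\alpha})$ approximate-parallelogram hypothesis perturbs by a relative factor $h_{\textbf{p}}^{\alpha}$. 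One bookkeeping correction: each telescoped term is of size $h_{\textbf{p}}^{3+\alpha}|H|$ (unperturbed product $|H|\cdot h_{\textbf{p}}\cdot h_{\textbf{p}}^{2}$ times a relative $h_{\textbf{p}}^{\alpha}$), not $h_{\textbf{p}}^{2+\alpha}|H|$ as you wrote; and at the point where you hesitated, your first instinct was the right one, since for $\tau_\Pma$ linear one has $|H|\approx|\tau_\Pma|_{1,\omega_{\textbf{p}}}/|\omega_{\textbf{p}}|^{1/2}\approx h_{\textbf{p}}^{-1}|\tau_\Pma|_{1,\omega_{\textbf{p}}}$, whereas $|H|\lesssim|\tau_\Pma|_{1,\omega_{\textbf{p}}}$ is false. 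These two slips of one power of $h_{\textbf{p}}$ cancel, so the stated conclusion $h_{\textbf{p}}^{2+\alpha}|\tau_\Pma|_{1,\omega_{\textbf{p}}}$ is recovered correctly, exactly as in the paper.
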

\begin{proof} Recall some notation  in Lemma \ref{lem:highHHJ}. 
It follows from \eqref{onlyPiHHJ} that
\begin{equation}\label{eq:reone}
\int_{K_{\bold{p}}^l} \big (\tau_\Pma(\bold{x})-\PiHHJ\tau_\Pma(\bold{x})\big )\dx = -\int_{K_{\bold{p}}^l} \PiHHJ\big ((x_1 - M_{K_{\bold{p}}^l }^1)H_1 + (x_2 - M_{K_{\bold{p}}^l }^2)H_2\big )\dx.
\end{equation}
with  $H_k|_{K_{\bold{p}}^l }=H_k|_{K_{\bold{p}}^r}$, $k=1, 2$.
Given $1\leq i\leq 3$, the basis functions of the HHJ element on elements $K_{\bold{p}}^l$ and $K_{\bold{p}}^r$ are denoted by
$$
\Psi_i^l=\frac{1}{2\big((n_i^l)^T\bold{t}^l_{i-1}\big)\big((n_i^l)^T\bold{t}^l_{i+1}\big)}\big(\bold{t}^l_{i+1}(\bold{t}^l_{i-1})^T + \bold{t}^l_{i-1}(\bold{t}^l_{i+1})^T\big)
$$
and
$$
\Psi_i^r=\frac{1}{2\big((n_i^r)^T\bold{t}^r_{i-1}\big)\big((n_i^r)^T\bold{t}^r_{i+1}\big)}\big(\bold{t}^r_{i+1}(\bold{t}^r_{i-1})^T + \bold{t}^r_{i-1}(\bold{t}^r_{i+1})^T\big),
$$
respectively. Let $\bold{m}_{i}^l=(m_{i,1}^l,m_{i,2}^l)$.  Since $(\Psi_i^l)_{\bold{n}^l_j\bold{n}^l_j}=\delta_{ij}$,   \eqref{eq:reone} shows
\begin{align}\label{onlyPiHHJtwo}
\int_{K_{\bold{p}}^l} \big (\tau_\Pma(\bold{x})-\PiHHJ\tau_\Pma(\bold{x})\big )\dx=- |K_{\bold{p}}^l|  \sum_{i=1}^3\big((m_{i,1}^l-M_{K_{\bold{p}}^l}^1)H_1+(m_{i,2}^l-M_{K_{\bold{p}}^l}^2)H_2)_{\bold{n}_i^l\bold{n}_i^l}\Psi_i^l.
\end{align} Since $K_{\bold{p}}^l$ and $K_{\bold{p}}^r$ form an $O(h^{1+\alpha}_{\textbf{p}})$ approximate parallelogram, this leads to 
\begin{align*}
|K_{\bold{p}}^l-K_{\bold{p}}^r|\lesssim h^{\alpha}_{\textbf{p}}|K_{\bold{p}}^l|,\quad |(\bold{m}_i^l-\bold{M}_{K_{\bold{p}}^l})-(\bold{m}_i^r-\bold{M}_{K_{\bold{p}}^r })|\lesssim h^{\alpha}_{\textbf{p}}|\bold{m}_i^l-\bold{M}_{K_{\bold{p}}^l}|,\\
|\Psi_i^l-\Psi_i^r|\lesssim h^{\alpha}_{\textbf{p}}|\Psi_i^l|,\quad |(\bold{n}_i^l)^TH_k\bold{n}_i^l-(\bold{n}_i^r)^TH_k\bold{n}_i^r|\lesssim h^{\alpha}_{\textbf{p}}|H_k|.
\end{align*} 
The combination of the above estimates  with \eqref{onlyPiHHJtwo} leads to
\begin{align*}
\big|\int_{K_{\textbf{p}}^l}(\tau_\Pma-\PiHHJ\tau_\Pma)\,dx-\int_{K_{\textbf{p}}^r}(\tau_\Pma-\PiHHJ\tau_\Pma)\,dx\big| \lesssim h^{\alpha}_{\textbf{p}}|K_{\bold{p}}^l||\bold{m}_i^l-\bold{M}_{K_{\bold{p}}^l}||\Psi_i^l|(|H_1|+|H_2|) 
 \lesssim h^{2+\alpha}_{\textbf{p}}|\tau_\Pma|_{1,\omega_{\textbf{p}}}.
\end{align*}
This concludes the proof.
\end{proof}
 \begin{Lm}\label{lem:Jonetwomodify}Let  $\rho=\min(1,\alpha)$ and assume $\sbiM\in H^{\frac{5}{2}}(\Omega)$. Then,
  \begin{align*}
 | \bold{J}^2|\lesssim h^{1+\rho}(|\sbiM|_{\frac{5}{2},\Omega}+\kappa|\ln h|^{1/2}|\sbiM|_{1,\infty,\Omega})\|\sHHJ-\PiHHJ\sbiM\|_{0,\Omega}.
  \end{align*}
  \end{Lm}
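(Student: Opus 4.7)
The plan is to mirror the argument of Lemma~\ref{lem:Jonetwo}, replacing the exact cancellation from Lemma~\ref{lem:highHHJ} by the approximate cancellation of order $O(h^{2+\alpha})$ furnished by Lemma~\ref{lem:modify}, and absorbing the geometric mismatches that arise because neighboring boundary triangles are now only $O(h^{1+\alpha})$ approximate parallelograms. First I would apply the discrete Helmholtz decomposition Lemma~\ref{lemm:hel}, which only requires $\Omega$ to be simply connected and hence remains valid on mildly structured meshes, to write $\sHHJ-\PiHHJ\sbiM=H^T\epsilon(\phi_h)H$ for some $\phi_h\in(S_h)^2/{\rm RM}$. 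For any boundary edge $e\in\mathcal{E}_h^b$ the identity
\[
(\sHHJ-\PiHHJ\sbiM)_{\bold{n}_e\bold{n}_e}=\frac{\bold{t}_e\cdot\phi_h(\bold{p}_e^2)-\bold{t}_e\cdot\phi_h(\bold{p}_e^1)}{h_e}
\]
still holds edgewise, so substituting into $\bold{J}^2$ and regrouping by boundary vertex yields
\[
|\bold{J}^2|\lesssim\Bigl|\sum_{\bold{p}\in\cP_b^2}\phi_h(\bold{p})\cdot D_{\bold{p}}\Bigr|+\sum_{\bold{p}\in\cP_b^1}|\phi_h(\bold{p})|\cdot|C_{\bold{p}}|,
\]
where $D_{\bold{p}}$ couples the two weighted integrals over $K_{\bold{p}}^l$ and $K_{\bold{p}}^r$ (now with slightly different normals $\bold{n}_{e_l},\bold{n}_{e_r}$ and basis tensors $\Psi_{e_l},\Psi_{e_r}$) and $C_{\bold{p}}$ is the corner contribution from a single boundary triangle.

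Next I would bound $D_{\bold{p}}$ for each $\bold{p}\in\cP_b^2$. Split $\sbiM=\tau_\Pma+(\sbiM-\tau_\Pma)$, where $\tau_\Pma$ is the $L^2$-projection of $\sbiM$ onto linear symmetric tensors on $\omega_{\bold{p}}$. Applied to $\tau_\Pma$, Lemma~\ref{lem:modify} gives the near-cancellation bound $h_{\bold{p}}^{2+\alpha}|\tau_\Pma|_{1,\omega_{\bold{p}}}\lesssim h_{\bold{p}}^{2+\alpha}|\sbiM|_{1,\omega_{\bold{p}}}$ for the linear part of the integral difference, while for the residual $\sbiM-\tau_\Pma$ the Bramble–Hilbert lemma together with the $L^2$-stability of $\PiHHJ$ gives $\|(I-\PiHHJ)(\sbiM-\tau_\Pma)\|_{0,K}\lesssim h^2|\sbiM|_{2,\omega_{\bold{p}}}$, and Cauchy–Schwarz on each of $K_{\bold{p}}^l,K_{\bold{p}}^r$ yields an $h^3|\sbiM|_{2,\omega_{\bold{p}}}$ contribution. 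The $(\alpha,\sigma)$-condition guarantees $|\Psi_{e_l}-\Psi_{e_r}|+|\bold{n}_{e_l}-\bold{n}_{e_r}|+|h_{e_l}^{-1}\bold{t}_{e_l}-h_{e_r}^{-1}\bold{t}_{e_r}|h\lesssim h^{\alpha}$, which when paired with the two integrals of size $O(h^{3})$ contributes an extra $O(h^{3+\alpha})$ term of lower order. Combining these estimates and dividing by the denominator $h_e\approx h_{\bold{p}}$ yields
\[
|D_{\bold{p}}|\lesssim h_{\bold{p}}^{1+\rho}\bigl(|\sbiM|_{1,\omega_{\bold{p}}}+|\sbiM|_{2,\omega_{\bold{p}}}\bigr),\qquad \rho=\min(1,\alpha).
\]

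Finally I would close the argument exactly as in Lemma~\ref{lm:RTbdest}. For the interior sum, Cauchy–Schwarz gives $\sum_{\bold{p}\in\cP_b^2}|\phi_h(\bold{p})||D_{\bold{p}}|\lesssim h^{1+\rho}\|\phi_h\|_{0,\partial_h\Omega}(|\sbiM|_{2,\partial_h\Omega}+|\sbiM|_{1,\partial_h\Omega})$, and two applications of Lemma~\ref{bdtodomain} (together with $\|\phi_h\|_{1/2,\Omega}\lesssim\|\phi_h\|_{1,\Omega}$) upgrade this to $h^{2+\rho}|\sbiM|_{\frac{5}{2},\Omega}\|\phi_h\|_{1,\Omega}$. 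For the corner sum, there are only $\kappa$ terms, so a direct estimate with $\|\phi_h\|_{0,\infty,h}$ combined with $|\sbiM|_{1,\infty,\Omega}$ and the discrete Sobolev inequality \eqref{eq:discreteSobolev} contributes $\kappa h^{2+\rho}|\ln h|^{1/2}|\sbiM|_{1,\infty,\Omega}\|\phi_h\|_{1,\Omega}$. Noting $\|\phi_h\|_{1,\Omega}\lesssim\|H^T\epsilon(\phi_h)H\|_{0,\Omega}=\|\sHHJ-\PiHHJ\sbiM\|_{0,\Omega}$ by the stability of the decomposition in Lemma~\ref{lemm:hel} delivers the claimed estimate after dividing out.

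The main obstacle will be Paragraph~2–3, namely producing the $h_{\bold{p}}^{1+\rho}$ bound on $D_{\bold{p}}$. Lemma~\ref{lem:modify} only controls the \emph{unweighted} integral difference over a $(\alpha,\sigma)$-approximate parallelogram, whereas $D_{\bold{p}}$ carries the tensor weights $\Psi_{e_l}$ on $K_{\bold{p}}^l$ and $\Psi_{e_r}$ on $K_{\bold{p}}^r$. Careful bookkeeping is needed to show that the $O(h^\alpha)$ discrepancy between these weights and between the tangent/length factors $h_e^{-1}\bold{t}_e$ does not spoil the cancellation; the key is that these discrepancies multiply quantities that are already $O(h^{3})$, so they drop into the lower-order $h^{3+\alpha}$ reservoir which is still $\lesssim h^{2+\rho}$.
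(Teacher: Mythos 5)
Your proposal follows essentially the same route as the paper's proof: discrete Helmholtz decomposition, regrouping of the boundary sum by vertex, extraction of the $O(h^\alpha)$ geometric mismatches permitted by the $(\alpha,\sigma)$-condition, and the splitting $\sbiM=\Pi_1\sbiM+(\sbiM-\Pi_1\sbiM)$ so that Lemma~\ref{lem:modify} handles the linear part while the Bramble--Hilbert lemma handles the remainder, followed by the Cauchy--Schwarz/Lemma~\ref{bdtodomain}/discrete-Sobolev closing argument of Lemma~\ref{lem:Jonetwo}. The only blemish is a one-power-of-$h$ slip in your final paragraph: since $|\phi_h(\mathbf{p})|\lesssim h^{-1}\|\phi_h\|_{0,\omega_{\mathbf{p}}}$, the interior sum is $\lesssim h^{\rho}\|\phi_h\|_{0,\partial_h\Omega}\|\sbiM\|_{2,\partial_h\Omega}$, which Lemma~\ref{bdtodomain} upgrades to $h^{1+\rho}$ (not $h^{2+\rho}$) --- exactly the stated bound, so the conclusion is unaffected.
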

  \begin{proof}The proof is analogous to that of  Lemma~\ref{lem:Jonetwo}.   Let $\textbf{p}_e^1$ and $\textbf{p}_e^2$ denote the two vertices of $e$ such that $\bold{t}_e$ points from $\textbf{p}_e^1$ to $\textbf{p}_e^2$. The same arguments as in \eqref{eq:Jonetwofirst}-\eqref{eq:Jonetwosec} lead to \begin{align*} 
   \bold{J}^2=&\sum_{e\in\cE_h^b}(H^T\epsilon(\phi_h)H)_{\bold{n}_e\bold{n}_e}\int_{K_{e}}\Psi_{e}:(\sbiM-\PiHHJ\sbiM)\,dx 
  \end{align*}
and
  \begin{align*} 
 (H^T\epsilon(\phi_h)H)_{\bold{n}_e\bold{n}_e}=\frac{1}{|e|}\int_{e }\nabla(\bold{t}_e\cdot\phi_h)\cdot \bold{t}_e\,dx=\frac{\bold{t}_e\cdot\phi_h(\textbf{p}_{e}^2)-\bold{t}_e\cdot\phi_h(\textbf{p}_{e}^1)}{|e|}.
  \end{align*}
  with $\phi_h\in (S_h)^2/{\rm RM}$ in \eqref{eq:prokey}.
Given any  $\textbf{p}\in\cP_{b}$, recall its  two  associated boundary triangles $K_{\textbf{p}}^l$ and $K_{\textbf{p}}^r$. It holds that
\begin{align} \label{eq:Jtwoes}
  | \bold{J}^2|\leq&\sum_{\textbf{p}\in\Pma_b} \big|\frac{ t_{e_1^l} \cdot\phi_h(\textbf{p})}{|e_1^l|}\int_{K_{\textbf{p}}^l}\Psi_{e_1^l}:(\sbiM -\hspace{-0.25mm}\PiHHJ\sbiM)\,dx - \frac{ t_{e_1^r} \cdot\phi_h(\textbf{p})}{|e_1^r|}\int_{K_{\textbf{p}}^r}\Psi_{e_1^r}:(\sbiM\hspace{-0.3mm}-\hspace{-0.3mm}\PiHHJ\sbiM)\,dx \big|
  \end{align}
Since   the elements associated with each  $\bold{p}\in\cP_{b}^2$ form an $O(h^{1+\alpha}_{\textbf{p}})$ approximate parallelogram, this shows $| t_{e_1^l}- t_{e_1^r}|\lesssim h_{\textbf{p}}^\alpha$, $\big||e_1^l|-|e_1^r|\big|\lesssim h_{\textbf{p}}^{ \alpha}|e_1^l|$ and $|\Psi_{e_1^l}-\Psi_{e_1^r}|\lesssim h_{\textbf{p}}^{\alpha}$. The combination with \eqref{eq:Jtwoes} and the estimate of the interpolation $\Pi_{\rm HHJ}$ analogy to \eqref{interpolationHHJ} on each element
   \begin{align*} 
   \begin{aligned}
   &|\bold{J}^2| \lesssim \sum_{\textbf{p}\in\Pma_b^2} |\phi_h(\textbf{p})| \Big( h_{\textbf{p}}^{1+\alpha} |\sbiM|_{1,K_{\textbf{p}}^r}+|e_1^l|^{-1} \Big|
  \int_{K_{\textbf{p}}^l}\Psi_{e_1^l}:(\sbiM\hspace{-0.25mm}-\hspace{-0.25mm}\PiHHJ\sbiM)\,dx\hspace{-0.25mm}-\hspace{-0.25mm}\int_{K_{\textbf{p}}^r}\Psi_{e_1^l}:(\sbiM\hspace{-0.3mm}-\hspace{-0.3mm}\PiHHJ\sbiM)\,dx \Big|\Big)\\
   &+  \sum_{\textbf{p}\in\Pma_b^1 }|e_1^l|^{-1}|\phi_h(\textbf{p})| \Big(\big|
  \int_{K_{\textbf{p}}^l}\Psi_{e_1^l}:(\sbiM-\PiHHJ\sbiM)\,dx\big|+\big|\int_{K_{\textbf{p}}^r}\Psi_{e_1^r}(\sbiM-\PiHHJ\sbiM)\,dx \big|\Big).
  \end{aligned}
  \end{align*}
Recall $\rho=\min(1,\alpha)$ and let $\Pi_1$ denote the $L^2$ projection onto  linear polynomials on $\omega_{\textbf{p}}$. A triangle inequality, the approximation of $\Pi_1$  and Lemma~\ref{lem:modify} modify the result in \eqref{esHHJ3} as follows 
 \begin{align*}
  \begin{aligned}
 &\big|\int_{K_{\textbf{p}}^l}\Psi_{e_1^l}:(\sbiM-\PiHHJ\sbiM)\,dx-\int_{K_{\textbf{p}}^r}\Psi_{e_1^l}:(\sbiM-\PiHHJ\sbiM)\,dx\big|\\
  \lesssim&h_{\textbf{p}}^3|\sbiM|_{2,\omega_{\textbf{p}}}+ \big|\int_{K_{\textbf{p}}^l}\Psi_{e_1^l}:(\Pi_1\sbiM-\PiHHJ(\Pi_1\sbiM))\,dx-\int_{K_{\textbf{p}}^r}\Psi_{e_1^l}:(\Pi_1\sbiM-\PiHHJ(\Pi_1\sbiM))\,dx\big|\\
  \lesssim &h_{\textbf{p}}^{2+\rho}\|\sbiM\|_{2,\omega_{\textbf{p}}}.
  \end{aligned}
  \end{align*}
  The remaining arguments in Lemma~\ref{lem:Jonetwo}  conclude the proof.
\end{proof}
\begin{theorem}
 If $\cT_h$ satisfies the $(\alpha,\sigma)$-condition and the meshsize condition \eqref{eq:meshcon}, then
 \begin{align}\label{thm:resu}
\|\sHHJ-\PiHHJ\sbiM\|_{0,\Omega}\lesssim h^{1+\rho}
\big(\|\sbiM\|_{\frac{5}{2},\Omega}+\kappa|{\ln}h|^{1/2}|\sbiM|_{1,\infty,\Omega}\big)
\end{align}
with $\rho=\min\big(1,\alpha,\frac{\sigma}{2}\big)$.
\end{theorem}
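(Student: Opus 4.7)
The plan is to mirror the strategy used for uniform meshes but to account for the $(\alpha,\sigma)$-mildness by separating the contributions from well-structured and exceptional edges. Starting from the decomposition \eqref{eq:Jtotal}, $\|\sHHJ-\PiHHJ\sbiM\|_{0,\Omega}^2 = \bold{J}^1 + \bold{J}^2$, I would invoke Lemma~\ref{lem:Jonetwomodify} directly for the boundary term, which yields $|\bold{J}^2| \lesssim h^{1+\min(1,\alpha)}\bigl(|\sbiM|_{\frac{5}{2},\Omega}+\kappa|\ln h|^{1/2}|\sbiM|_{1,\infty,\Omega}\bigr)\|\sHHJ-\PiHHJ\sbiM\|_{0,\Omega}$; the remaining work is therefore to control the interior term $\bold{J}^1$.

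I would split $\bold{J}^1 = \bold{J}^1_{\cE_1} + \bold{J}^1_{\cE_2}$ along the partition $\cE_h^i = \cE_1 \cup \cE_2$ supplied by the $(\alpha,\sigma)$-condition. For each $e\in\cE_1$, the two incident triangles form an $O(h_e^{1+\alpha})$ approximate parallelogram, and I would first establish an interior analog of Lemma~\ref{lem:modify}: for $\tau_\Pma\in\sSpaMBi$ linear on $N_e:=K_e^1\cup K_e^2$,
\[
\Big|\int_{K_e^1}\Psi_e:(\tau_\Pma-\PiHHJ\tau_\Pma)\,dx+\int_{K_e^2}\Psi_e:(\tau_\Pma-\PiHHJ\tau_\Pma)\,dx\Big|\lesssim h_e^{2+\alpha}|\tau_\Pma|_{1,N_e}.
\]
In the exact parallelogram limit this sum vanishes by symmetry, and the $O(h_e^{1+\alpha})$ geometric perturbation supplies the quantitative remainder; the verification mimics the centroid-midpoint-basis expansion in the proof of Lemma~\ref{lem:modify}. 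Combining this estimate with the discrete Helmholtz decomposition~\eqref{eq:prokey}, a Bramble-Hilbert step via the $L^2$-projection $\Pi_1$ onto linears on $N_e$ (so that the residual contributes at most $h_e^3|\sbiM|_{2,N_e}$ as in the uniform case), Cauchy-Schwarz, the trace-type estimate Lemma~\ref{bdtodomain}, and the discrete Sobolev inequality~\eqref{eq:discreteSobolev} (valid under~\eqref{eq:meshcon}) along the lines of Lemmas~\ref{lem:Jonetwo} and~\ref{lem:Jonetwomodify} produces
\[
|\bold{J}^1_{\cE_1}|\lesssim h^{1+\min(1,\alpha)}\bigl(|\sbiM|_{\frac{5}{2},\Omega}+\kappa|\ln h|^{1/2}|\sbiM|_{1,\infty,\Omega}\bigr)\|\sHHJ-\PiHHJ\sbiM\|_{0,\Omega}.
\]

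For $e\in\cE_2$ no cancellation is available; I would apply Cauchy-Schwarz directly. Since $\sHHJ-\PiHHJ\sbiM$ is piecewise constant, $|(\sHHJ-\PiHHJ\sbiM)_{\bold{n}_e\bold{n}_e}|\,|K|^{1/2}\lesssim\|\sHHJ-\PiHHJ\sbiM\|_{0,K}$; combined with $\|\sbiM-\PiHHJ\sbiM\|_{0,K}\lesssim h_K|\sbiM|_{1,K}$, the uniform bound $|\sbiM|_{1,K}\lesssim|K|^{1/2}|\sbiM|_{1,\infty,\Omega}$, and the area bound $\sum_{e\in\cE_2}(|K_e^1|+|K_e^2|)=O(h^\sigma)$ from the $(\alpha,\sigma)$-condition, a second Cauchy-Schwarz gives
\[
|\bold{J}^1_{\cE_2}|\lesssim h^{1+\sigma/2}|\sbiM|_{1,\infty,\Omega}\|\sHHJ-\PiHHJ\sbiM\|_{0,\Omega}.
\]
Assembling the three bounds and dividing out one factor of $\|\sHHJ-\PiHHJ\sbiM\|_{0,\Omega}$ produces the claimed rate $h^{1+\rho}$ with $\rho=\min(1,\alpha,\sigma/2)$.

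The main obstacle is the interior analog of Lemma~\ref{lem:modify} highlighted above. Unlike the boundary case, where the two triangles are related by translation and the integrals naturally subtract, the interior pair is related by reflection across $e$; the HHJ basis functions $\Psi_e|_{K_e^1}$ and $\Psi_e|_{K_e^2}$ are only approximately reflections of one another, so the edge lengths, outer normals, tangent directions, and centroids must be tracked element-wise and the $O(h_e^{1+\alpha})$-scale remainder isolated via an explicit expansion analogous to \eqref{onlyPiHHJtwo}.
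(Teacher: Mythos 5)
Your proposal is correct and follows essentially the same route as the paper: the same splitting into $\bold{J}^1$ (further decomposed over $\cE_1$ and $\cE_2$) and $\bold{J}^2$, with Lemma~\ref{lem:Jonetwomodify} handling the boundary sum, an approximate-parallelogram cancellation (the interior analogue of Lemma~\ref{lem:modify}) handling $\cE_1$, and the $O(h^{\sigma})$ area bound plus Cauchy--Schwarz handling $\cE_2$. One small correction to your closing remark: the two triangles of an interior (approximate) parallelogram are related by the point reflection through the midpoint of $e$, not by reflection across the line containing $e$; it is this point symmetry that makes the two summed integrals cancel for linear $\tau_\Pma$, exactly the mechanism behind the uniform-mesh estimate \eqref{HHJest1} that the paper invokes.
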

 \begin{proof}
 The estimate of the term for $e\in\cE_1 $  in $\bold{J}^1$  on the right-hand side of \eqref{eq:Jtotal} combines   Theorem~ 5.3 of \cite{Hu2016Superconvergence} for uniform meshes  and  a similar argument as in Lemma~\ref{lem:modify} for $O(h^{1+\alpha})$ approximate parallelograms. The details are omitted.  It holds  
   \begin{align*}
   &\sum_{e\in\cE_1}(\sHHJ-\PiHHJ\sbiM)_{\bold{n}_e\bold{n}_e}\big(\int_{K_e^1}\Psi_{e}:(\sbiM-\PiHHJ\sbiM)dx+\int_{K_e^2}\Psi_{e}:(\sbiM-\PiHHJ\sbiM)dx\big)\\
   \lesssim &h^{1+\min (1,\alpha)}\|\sHHJ-\PiHHJ\sbiM\|_{0,\Omega}\|\sbiM\|_{2,\Omega}.
  \end{align*}
For $e\in\cE_2$,  the estimate of the  interpolation $\Pi_{\rm HHJ}$ shows
 \begin{align*}
 &\sum_{e\in\cE_2}(\sHHJ-\PiHHJ\sbiM)_{\bold{n}_e\bold{n}_e}\big(\int_{K_e^1}\Psi_{e}:(\sbiM-\PiHHJ\sbiM)dx+\int_{K_e^2}\Psi_{e}:(\sbiM-\PiHHJ\sbiM)dx\big)\\
\lesssim& \sum_{e\in\cE_2}h_e(|K_e^1|^{1/2}+|K_e^2|^{1/2})\|\sHHJ-\PiHHJ\sbiM \|
 _{K_e^1}|( | \sbiM|_{1,\infty,K_e^1}+ | \sbiM|_{1,\infty,K_e^2})\\
\lesssim&h^{1+\frac{\sigma}{2}}\|\sHHJ-\PiHHJ\sbiM\|_{0,\Omega}  | \sbiM|_{1,\infty,\Omega}.
 \end{align*} Hence,
   \begin{align*}
   |\bold{J}^1|\lesssim h^{1+\min (1,\alpha,\frac{\sigma}{2})}\|\sHHJ-\PiHHJ\sbiM\|_{0,\Omega}(\|\sbiM\|_{2,\Omega}+| \sbiM|_{1,\infty,\Omega}).
  \end{align*}
  The combination with \eqref{eq:Jtotal}  and Lemma~\ref{lem:Jonetwomodify} concludes the proof.
 \end{proof}
 \begin{remark}Let $\Omega$ be decomposed into $N$ subdomains, where $N$ is independent of $h$.  $\mathcal{T}_h$ satisfies the piecewise $(\alpha,\sigma)$-condition if the restriciton of $\mathcal{T}_h$ to each subdomain satisfies the $(\alpha,\sigma)$-condition. As remarked in \cite[Remark 3.8]{li2017global},  \eqref{thm:resu} still holds on piecewise $(\alpha,\sigma)$-grids.
 \end{remark}
\begin{remark} Suppose $\cT_h$ satisfies the $(\alpha,\sigma)$-condition and \eqref{eq:meshcon}. Following similar  arguments, 
a superconvergence result analogous to Theorem~\ref{Lm:bdRT} shall hold  for the Raviart-Thomas element, i.e.,
$$
\parallel \sigRTS-\PiRT\sigS\parallel_{0,\Om}\lesssim h^{1+\rho} \big (| \sigS|_{\frac{5}{2},\Om}+\kappa |\ln h|^{1/2}|\sigS|_{1,\infty,\Om}\big ).
$$
The details are omitted.
\end{remark}
\section{Numerical examples}
In this section, we present  numerical tests for the  Morley element  to confirm the theoretical superconvergence analysis in Section~\ref{sec:HHJ}. The results for RT and CR  elements  can be found in \cite{Hu2016Superconvergence, li2017global}. Consider the following plate bending problem
\begin{equation*}
  \Delta^2 \ubiM=f\quad\text{in }\Omega
\end{equation*}
with $\ubiM\in H^2_0(\Omega)$. The exact solution is
\begin{equation*}
  \ubiM(x_1,x_2)=(x_2-\sqrt{3}x_1)^2(x_2-\sqrt{3}x_1+2\sqrt{3})^2x_2^2( \sqrt{3} -x_2)^2.
\end{equation*}
We compare the error $\|\nabla^2 \ubiM-\nabla_h^2\uMorley\|_{0,\Omega}$, the interpolation error  $\| \nabla_h^2\uMorley-\PiHHJ\nabla^2 \ubiM\|_{0,\Omega}$ and the post-processing error $\|\nabla^2 \ubiM-K_h\nabla_h^2\uMorley\|_{0,\Omega}$ by the Morley element. 

Figure~\ref{fig:para}  shows three kinds of initial meshes. The meshes are generated by uniform refinements. The corresponding results are listed in Table~\ref{platesuperone}-\ref{platesuperthree}. The initial mesh in Figure~\ref{fig:sub2a} is a uniform mesh.  The results in Table~\ref{platesuperone} coincides with the theoretical results. The initial mesh in Figure~\ref{fig:sub2b} is a  $(\infty,1)$-mesh. However, it is a piecewise uniform mesh and in this case $\Omega$ is decomposed into two subdomains. Table~\ref{platesupertwo} shows that the interpolation error is optimal while the postprocessing error is suboptimal. This happens because  the values  of the postprocessing $K_h$ on the edges of the boundary of subdomains are not chosen appropriately. The initial mesh in Figure~\ref{fig:sub2c} is a delaunay mesh. Since the meshes are generated by uniform refinements, the initial mesh partitions $\Omega$ into several subdomains and its refinement are piecewise uniform meshes. The result in Table~\ref{platesuperthree} is similar to that in Table~\ref{platesupertwo}.
\begin{figure}[htbp]                                                 
\subfigure[uniform mesh]{                    
\begin{minipage}{4cm}\centering                                                          
\includegraphics[width=4cm,height=2.7cm]{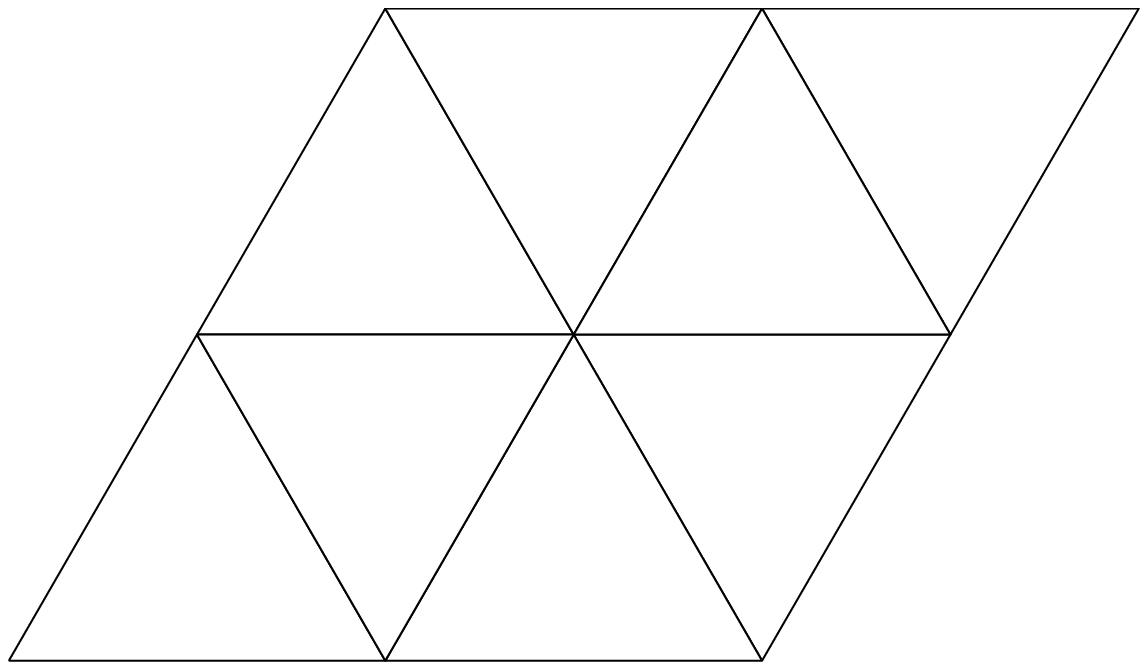}         \label{fig:sub2a}          
\end{minipage}}
\subfigure[piecewise uniform mesh]{                    
\begin{minipage}{4cm}\centering                                                          
\includegraphics[width=4cm,height=2.7cm]{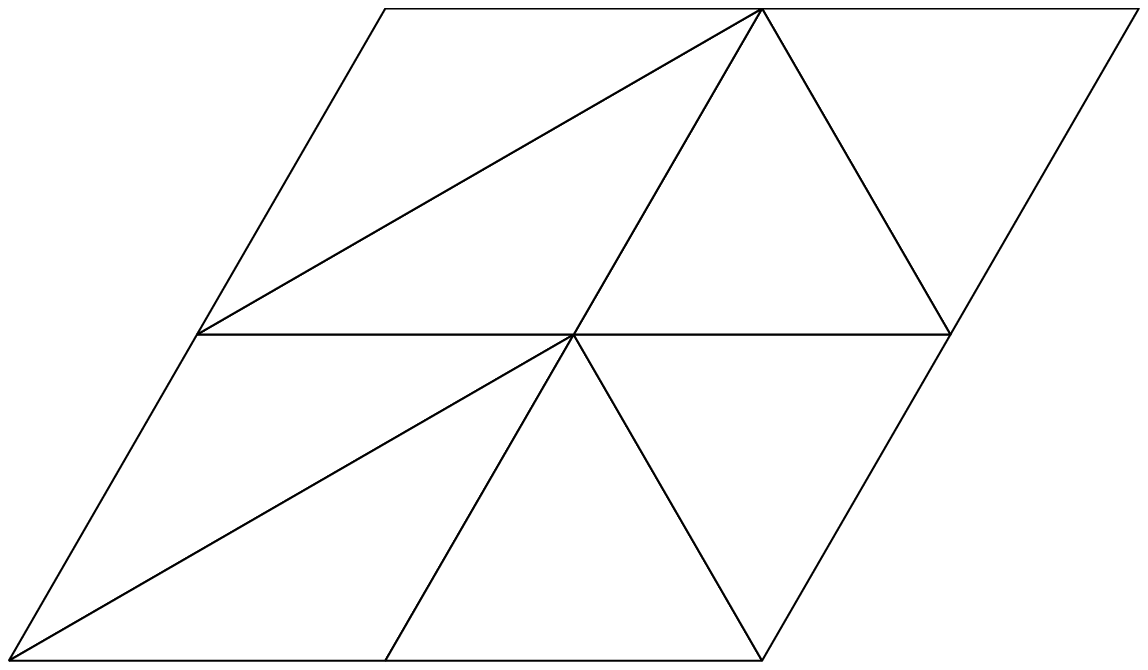}         \label{fig:sub2b}          
\end{minipage}}\subfigure[delaunay  mesh]{                    
\begin{minipage}{4cm}\centering                                                          
\includegraphics[width=4cm,height=2.7cm]{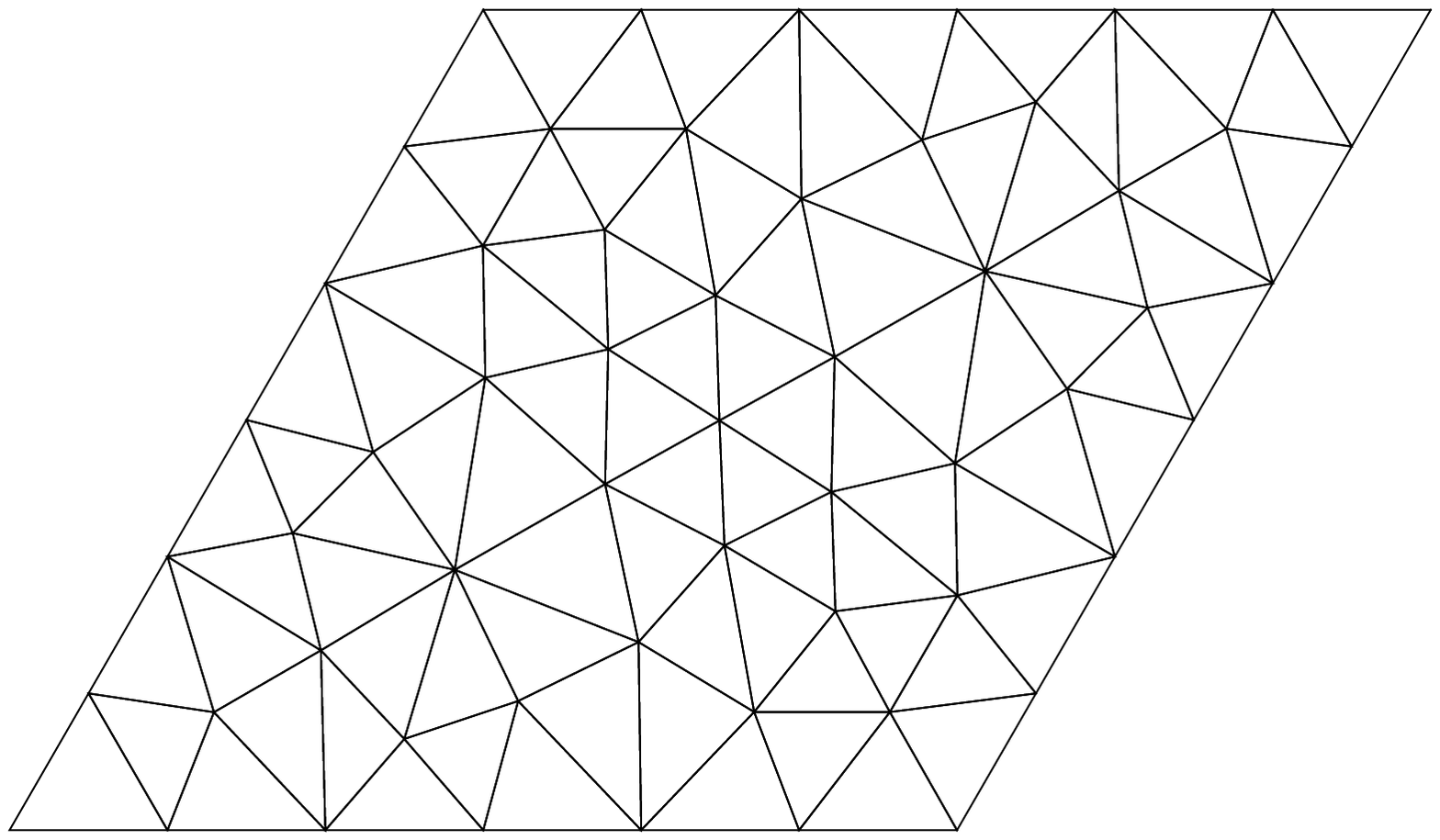}        \label{fig:sub2c}      
\end{minipage}   }
\caption{Initial meshes} 
\label{fig:para}                                                        
\end{figure}

\begin{table}[!ht]
\footnotesize
  \caption{Convergence results on mesh (a)}\label{platesuperone}
  \centering
\begin{tabular}{|c|c|c|c|c|c|c|}
\hline
Mesh &$\|\nabla^2 \ubiM-\nabla_h^2\uMorley\|_{0,\Omega}$   &   $Rate$&    $\| \nabla_h^2\uMorley-\PiHHJ\nabla^2 \ubiM\|_{0,\Omega}$          &    $Rate$&    $\|\nabla^2 \ubiM-K_h\nabla_h^2\uMorley\|_{0,\Omega}$          &    $Rate$  \\
\hline
1	&	61.9148	&		&	68.6151	&		&	67.8685	&	\\
2	&	40.1923	&	0.6234	&	21.4424	&	1.6781	&	27.3979	&	1.3087\\
3	&	23.6110	&	0.7675	&	5.8356	&	1.8775	&	8.3891	&	1.7075\\
4	&	12.3520 &	0.9347	&	1.4922	&	1.9674	&	2.1181	&	1.9857\\
5	&	6.2484	&	0.9832	&	0.3752	&	1.9917	&	0.5147	&	2.0410\\
6	&	3.1334	&	0.9958	&	0.0939	&	1.9985	&	0.1252	&	2.0395\\
7	&	1.5678	&	0.9990	&	0.0235	&	1.9985	&	0.0307	&	2.0279\\
\hline
\end{tabular}
\end{table}
\begin{table}[!ht]
\footnotesize
  \caption{Convergence results on mesh (b)}\label{platesupertwo}
  \centering
\begin{tabular}{|c|c|c|c|c|c|c|}
\hline
Mesh &$\|\nabla^2 \ubiM-\nabla_h^2\uMorley\|_{0,\Omega}$   &   $Rate$&    $\| \nabla_h^2\uMorley-\PiHHJ\nabla^2 \ubiM\|_{0,\Omega}$          &    $Rate$&    $\|\nabla^2 \ubiM-K_h\nabla_h^2\uMorley\|_{0,\Omega}$            &    $Rate$  \\
\hline
1&		70.3165	&		&	122.4437	&		&	75.6681	&	\\
2&		62.2872	&	0.1749	&	44.2756	&	1.4675	&	41.5546	&	0.8647\\
3	&	38.7694	&	0.6840	&	13.9626	&	1.6649	&	15.186	&	1.4523\\
4	&	20.7579	&	0.9013	&	4.0793	&	1.7752	&	4.7578	&	1.6744\\
5&		10.5964	&	0.9701	&	1.1357	&	1.8447	&	1.4481	&	1.7161\\
6	&	5.3302	&	0.9913	&	0.3067	&	1.8887	&	0.4432	&	1.7081\\
7	&	2.6696	&	0.9976	&	0.0815	&	1.9120	&	0.1395	&	1.6677\\
\hline
\end{tabular}
\end{table}
\begin{table}[!ht]
\footnotesize
  \caption{Convergence results on mesh (c)}\label{platesuperthree}
  \centering
\begin{tabular}{|c|c|c|c|c|c|c|}
\hline
Mesh &$\|\nabla^2 \ubiM-\nabla_h^2\uMorley\|_{0,\Omega}$   &   $Rate$&     $\| \nabla_h^2\uMorley-\PiHHJ\nabla^2 \ubiM\|_{0,\Omega}$          &    $Rate$&    $\|\nabla^2 \ubiM-K_h\nabla_h^2\uMorley\|_{0,\Omega}$         &    $Rate$  \\
\hline
1&29.3128	& &	11.1055	& &	15.3555	&\\
2&15.8475	&0.8873	&3.1380	&1.8234	&5.0514	&1.6040\\
3&8.1044	&0.9675&		0.8576	&	1.8715	&1.6318	&1.6302\\
4&4.0780&	0.9908&		0.2296&		1.9012	&0.5309	&1.6200\\
5&2.0426	&0.9975&		0.0607&		1.9194	&0.1770	&1.5847\\
6&1.0218	&0.9993&		0.0160&		1.9236&	0.0603&	1.5535\\
\hline
\end{tabular}
\end{table}


\end{document}